\newtheorem{Theorem}{Theorem}[section]
\newtheorem{Lemma}{Lemma}[section]
\newtheorem{Proposition}{Proposition}[section]
\newtheorem{Corollary}{Corollary}[section]
\newtheorem{Remark}{Remark}[section]
\theoremstyle{remark}
\newcommand{\be}{\begin{equation}}
\newcommand{\ee}{\end{equation}}
\newcommand{\R}{\mathbb{R}}\newcommand{\Id}{\textrm{\rm Id}}
\newcommand{\gl}{\mathrm{gl}}
\newcommand{\ddd}{\mathrm{d}}
\newcommand{\pd}[2]{\frac{\partial#1}{\partial#2}}
\newcommand{\tr}{\operatorname{tr}}
\newcommand{\weg}[1]{}
\title{Applications of Nijenhuis Geometry V: geodesically equivalent  metrics and finite-dimensional reductions of certain integrable quasilinear systems}
\author{Alexey V. Bolsinov\footnote{ School of Mathematics,
 Loughborough University,
 LE11 3TU, UK \ \ 
 \quad {\tt A.Bolsinov@lboro.ac.uk} } \quad
\& \quad  Andrey Yu. Konyaev\footnote{Faculty of Mechanics and Mathematics, Moscow State University,  and  Moscow Center for Fundamental and Applied Mathematics,119992, Moscow Russia
 \ \ \quad {\tt  maodzund@yandex.ru}} \quad \& \quad Vladimir S. Matveev\footnote{
Institut f\"ur Mathematik, Friedrich Schiller Universit\"at Jena,
07737 Jena Germany  and  La Trobe University, Melbourne, Australia \ \ \quad {\tt  vladimir.matveev@uni-jena.de}} 
}  
\begin{document}

\maketitle
\begin{abstract}
We describe all metrics geodesically compatible with a $\gl$-regular Nijenhuis operator $L$. The set of such metrics is large enough so that a generic local curve $\gamma$ is a geodesic for a suitable metric $g$ from this set. Next, we show that   a certain evolutionary PDE system of hydrodynamic type constructed from $L$ preserves the property of $\gamma$ to be a $g$-geodesic.
This implies that  every metric $g$ geodesically compatible with $L$ gives us a finite dimensional reduction of this PDE system. We show that its restriction onto the set of $g$-geodesics is naturally equivalent to the Poisson action of $\mathbb{R}^n$ on the cotangent bundle generated   by the integrals coming from geodesic compatibility.  
\end{abstract}

MSC: 37K05, 37K06, 37K10, 37K25, 37K50, 53B10, 53A20, 53B20, 53B30, 53B50, 53B99, 53D17

\section{Introduction }

This work continues the research program started in  \cite{openprob, nij}.
The main object of study within this program  are $(1,1)$-tensor fields with vanishing Nijenhuis torison known as {\it Nijenhuis operators}.  They pop up in many, a priori  unrelated, branches  of mathematics, so it makes sense  to develop  a general theory of Nijenhuis operators and then to apply the results and methods obtained wherever these operators appear (e.g., in the theory of geodesically equivalent metrics  as in the present paper). This approach treats  a Nijenhuis operator as a {\it primary} object, even if it initially appeared  as a {\it secondary} object in the study of another structure.  An unexpected positive outcome of this change of perspective is that it reveals hidden relationships between different subjects.
A demonstration of this phenomenon is the paper \cite{nijapp2} showing that pencils of  compatible  Poisson brackets of hydrodynamic type are closely related to geodesically equivalent metrics of constant curvature.  This allowed us to apply methods and results of a more developed theory of geodesically equivalent metrics to the theory of  geometric Poisson structures. Another example is an unexpected relation between compatible geometric Poisson structures  of type $\mathcal{P}_3 +\mathcal{P}_1$ and orthogonal  separation of variables for spaces of constant curvature, 
which allowed us to obtain new results in both subjects by combining the relevant ideas and methods. In the theory of separation of variables, this led us in  \cite{ortsepar}
to a description of all orthogonal separating coordinates for pseudo-Riemannian spaces of constant curvature,  solving a long-standing problem going back to L. Eisenhart \cite{eisenhart}.   On the other hand, in  \cite{nijapp2} we have constructed all non-degenerate compatible Poisson structures of the type $\mathcal{P}_3 +\mathcal{P}_1$  such that $\mathcal{P}_3$ is Darboux-Poisson, which, in turn, led us to a construction of new integrable PDE systems in  \cite{nijapp4}. 

In the present paper, the {\it partner structure} for  a  Nijenhuis operator $L$ is a metric $g$ which is geodesically compatible to it.   In Theorem 
\ref{thm:2}, we relate such  metrics to the symmetries of the operator $L$. Combining this relation with  results of \cite{nij4}, we describe all metrics geodesically compatible with a $\gl$-regular Nijenhuis operator and, in particular, show that  every such operator locally admits a geodesically compatible metric, see Theorem \ref{thm:1}.  Next, we consider an integrable PDE system of hydrodynamic type constructed from a $\gl$-regular  Nijenhuis operator $L$, which was studied (mostly, in the diagonal case)  in many papers including \cite{nijapp4, fer,  ml, MB2010}.  Theorem \ref{thm:3a} shows  that this system preserves the property of a curve to be a geodesic of any fixed geodesically compatible metric. In other words, every choice of  a geodesically compatible partner for $L$ on a manifold $\mathsf M$ gives us a finite-dimensional reduction of the system. Finally, Theorem \ref{thm:3b} states  that the   
corresponding finite-dimensional reduction is  naturally equivalent to the Poisson action on $T^*\mathsf{M}$ generated by the commuting integrals coming from geodesic compatibility. 

{\bf Acknowledgements.} Vladimir Matveev  thanks the DFG for the support (grant  MA 2565/7). Some of results were obtained during a long-term research visit of VM to La Trobe University supported by the  Sydney Mathematics Research Institute  and the ARC Discovery Programme   DP210100951.

\subsection{Definitions and  results}

Two (pseudo)-Riemannian metrics $g$ and $\bar g$ (of any, possibly different, signatures) are called {\it geodesically equivalent} if they share the same  geodesics viewed as unparameterized curves. According to \cite{eqm},  a manifold endowed with a pair of such metrics carries a natural Nijenhuis structure defined by the operator 
$$
L = \left|\frac{\det \bar g}{\det g}\right|^{\frac{1}{n+1}}   \bar g^{-1}  g.
$$
Since $\bar g$ is uniquely reconstructed from $L$  as $\bar g = \frac{1}{|\det L|} gL^{-1}$, the study of geodesically equivalent metrics reduces to the study of pairs $(g, L)$  satisfying the following compatibility condition:
a metric $g$ and a Nijenhuis operator $L$ are said to be {\it geodesically compatible}, if $L$ is $g$-self-adjoint and the metric $\bar g = \frac{1}{|\det L|} gL^{-1}$ is geodesically equivalent to $g$.

Analytically,  the geodesic compatibility condition is given by the PDE equation   \cite{eqm, sinjukov}
\begin{equation}
\label{eq:geodeqv0}
\nabla_\eta L = \frac{1}{2} \bigl(  \eta\otimes \ddd\tr L + (\eta\otimes \ddd\tr L)^*   \bigr),
\end{equation}
where $\eta$ is an arbitrary vector field.  Notice that this relation is linear in $L$. Our first result is an equivalent version of \eqref{eq:geodeqv0}, which is linear both in $L$ and in $g$  and contains no covariant derivative.

\begin{Theorem}\label{thm:0}
An operator $L$ and a metric $g$ are geodesically compatible if and only if $L$ is $g$-self-adjoint and the following relation holds 
\begin{equation}
\label{eq:geodeqv4bis}
 \mathcal L_{L\xi} \bigl(g(  \eta, \xi)\bigr) -  \mathcal L_\xi \bigl(g ( \eta, L\xi)\bigr) - g\bigl(  \eta, [L\xi ,\xi]\bigr)   + g \bigl( [\eta, L\xi], \xi \bigl) -  g \bigl( [\eta,\xi], L\xi\bigr)\,  =  g( \eta  , \xi)\,   \mathcal L_\xi \tr L,
 \end{equation}
for any vector fields $\xi$ and $\eta$. In local coordinates,  \eqref{eq:geodeqv4bis} is equivalent to 
\begin{equation}
\label{eq:c1bis}
 g_{k\alpha} \frac{\partial L^\alpha_j}{\partial x^i} 
+ \left( \frac{\partial g_{ik}}{\partial x^\alpha}   -  \frac{\partial g_{i\alpha}}{\partial x^k} \right) L^\alpha_j  {\color{blue}  \ +  \  (j \leftrightarrow k)} \, = \,    \, g_{ik}  \, \frac{\partial \tr L}{\partial x^j} {\color{blue} \ +  \ (j \leftrightarrow k)} 
\end{equation}
where $(j \leftrightarrow k)$ denotes the expression obtained by interchanging  indices $j$ and $k$. 
\end{Theorem}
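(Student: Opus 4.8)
The plan is to start from the known analytic characterization \eqref{eq:geodeqv0} of geodesic compatibility and massage it into the stated covariant-derivative-free form. The first step is to rewrite \eqref{eq:geodeqv0} by pairing both sides with two vector fields. Contracting $\nabla_\eta L$ against $\xi$ (in the appropriate slot) and $g$, the right-hand side of \eqref{eq:geodeqv0} becomes, after using $g$-self-adjointness of $L$, something proportional to $g(\eta,\xi)\,\ddd\tr L(\xi)$ plus a symmetrization term; the key observation is that the symmetrization built into \eqref{eq:geodeqv0} is exactly what is needed to match the explicit $(j\leftrightarrow k)$ symmetrization in \eqref{eq:c1bis}. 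So I would first establish \eqref{eq:c1bis} and then recognize \eqref{eq:geodeqv4bis} as its coordinate-free repackaging.

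The main technical device is to trade the Levi-Civita covariant derivative for Lie derivatives and commutators. Concretely, for the term $g(\nabla_\eta(L\xi),\xi)$ one writes $\nabla_\eta(L\xi)=\nabla_{L\xi}\eta + [\eta,L\xi]$ and similarly handles $\nabla_\eta\xi$; the Levi-Civita property then lets one express combinations like $g(\nabla_{L\xi}\eta,\xi)$ via $\mathcal L_{L\xi}(g(\eta,\xi))$ minus $g(\eta,\nabla_{L\xi}\xi)$, and the $\nabla\xi\xi$-type terms cancel in pairs because $\nabla$ is torsion-free and metric. Expanding $g(\nabla_\eta L\cdot\xi,\xi)$ this way, every occurrence of $\nabla$ should regroup into the five terms on the left of \eqref{eq:geodeqv4bis}: two "Lie derivative of a function" terms $\mathcal L_{L\xi}(g(\eta,\xi))$ and $\mathcal L_\xi(g(\eta,L\xi))$, and three commutator terms $g(\eta,[L\xi,\xi])$, $g([\eta,L\xi],\xi)$, $g([\eta,\xi],L\xi)$. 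The right-hand side $g(\eta,\xi)\,\mathcal L_\xi\tr L$ comes directly from contracting the right-hand side of \eqref{eq:geodeqv0} with $\xi\otimes\xi$, using $\eta\otimes\ddd\tr L(\xi,\xi)=\ddd\tr L(\xi)\,g\text{-dual stuff}$ — more carefully, one pairs \eqref{eq:geodeqv0} with $\eta$ and $\xi$ so that the symmetrized rank-two tensor $\eta\otimes\ddd\tr L + (\cdot)^*$ evaluated on $(\xi,\xi)$ gives precisely $2\,g(\eta,\xi)\,\ddd\tr L(\xi)$ after identifying the $*$ with the $g$-adjoint; one then checks the factor of $2$ cancels against the $\tfrac12$.

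For the local-coordinate equivalence I would simply substitute $\xi=\partial_j$, $\eta=\partial_i$ (and sum over the symmetrization $j\leftrightarrow k$ by also taking $\xi=\partial_k$ and adding), so that all Lie brackets $[\partial_i,\partial_j]$ vanish and the surviving terms are $g_{k\alpha}\,\partial_i L^\alpha_j$ and the curl-type combination $(\partial_\alpha g_{ik}-\partial_k g_{i\alpha})L^\alpha_j$, matching \eqref{eq:c1bis}. Conversely, since \eqref{eq:geodeqv4bis} is tensorial (both sides are $C^\infty$-linear in $\eta$, and the apparent non-tensoriality in $\xi$ is illusory — the derivative-of-$\xi$ terms cancel, which is itself a small lemma worth checking), holding in a coordinate basis implies it holds for all vector fields.

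The step I expect to be the genuine obstacle is the bookkeeping in the second paragraph: verifying that all the $\nabla\xi$ and $\nabla\eta$ contributions cancel so that the five-term left-hand side of \eqref{eq:geodeqv4bis} is genuinely tensorial and genuinely equals the contraction of $\nabla_\eta L$. This is a finite but delicate computation with several sign traps (torsion-free vs. metric compatibility, which slot of $L$ is being differentiated, the self-adjointness swap $g(L\eta,\xi)=g(\eta,L\xi)$). Everything else is routine once that identity is pinned down; the coordinate version \eqref{eq:c1bis} then serves as an independent sanity check of the signs.
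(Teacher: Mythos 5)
Your proposal follows essentially the same route as the paper: reduce \eqref{eq:geodeqv0} to the quadratic-form identity $g\bigl((\nabla_\eta L)\xi,\xi\bigr)=g(\eta,\xi)\,\mathcal L_\xi \tr L$ (using that both sides of \eqref{eq:geodeqv0} are $g$-self-adjoint) and then eliminate $\nabla$ via torsion-freeness and metric compatibility to produce exactly the five-term left-hand side of \eqref{eq:geodeqv4bis}. The only divergence is cosmetic: the paper obtains \eqref{eq:c1bis} directly from \eqref{eq:geodeqv0} in coordinates via Christoffel symbols, whereas you specialize \eqref{eq:geodeqv4bis} to coordinate fields --- which works, provided you polarize in $\xi$ (substitute $\xi=\partial_j+\partial_k$) rather than merely adding the $\xi=\partial_j$ and $\xi=\partial_k$ instances, since \eqref{eq:geodeqv4bis} is quadratic in $\xi$ and summing diagonal values does not recover the off-diagonal components.
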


Local description of geodesically compatible pairs $(g,L)$  is known at algebraically generic points, near which the algebraic type of $L$ does not change.    For diagonalisable operators $L$, it follows from the classical work by T. Levi-Civita \cite{LC}, the general case was done in \cite{tams}.   However, for singular points at which the algebraic type of $L$ changes (e.g., the eigenvalues of $L$ collide), the description of Nijenhuis operators $L$ admitting  at least one geodesically equivalent partner $g$ remains an open problem.  Certain restrictions definitely exist. For instance, the operator $L = \begin{pmatrix} x & 0 \\ 0 & y  \end{pmatrix}$ admits no geodesically compatible metric in the neighborhood of a point $(x,y)=(0,0)$.    The next theorem shows that {\it regular} collisions of eigenvalues are always allowed.

Following \cite{nij3}, we say that an operator $L$ is {\it $\gl$-regular}, if its adjoint orbit $\mathcal O_L = \{ PLP^{-1}~|~ \mbox{$P$ is invertible}\}$ has maximal dimension.  In simpler terms, this means that each eigenvalue $\lambda$ of $L$ admits only one linearly independent eigenvector  (equivalently, only one  $\lambda$-block in the Jordan normal form).
If $L$ is an operator on a manifold $\mathsf M$, then its eigenvalues can still collide without violating the $\gl$-regularity condition.  In Nijenhuis geometry,  scenarios of such collisions can be very different (see  \cite{nij3}).  However, regardless of any particular scenario,  we have the following general local result.

\begin{Theorem}\label{thm:1}
Let $L$ be a $\gl$-regular real analytic Nijenhuis operator.   Then (locally) there exists a pseudo-Riemannian metric $g$ geodesically compatible with $L$.  Moreover,  such a metric $g$ can be defined  explicitly in terms of the second companion form of $L$.
\end{Theorem}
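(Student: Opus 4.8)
The plan is to reduce Theorem~\ref{thm:1} to Theorem~\ref{thm:0} and the structural results on $\gl$-regular Nijenhuis operators from \cite{nij4}. The starting observation is that a $\gl$-regular operator $L$ can be put, near any point, into \emph{second companion form}: there exist local coordinates in which $L$ is block-diagonal with companion blocks whose last rows are built from the coefficients of the characteristic polynomial $\chi_L(t) = t^n - \sigma_1 t^{n-1} - \dots - \sigma_n$, and the Nijenhuis condition forces each $\sigma_i$ to depend only on the first $i$ of the companion coordinates. This is exactly the normal form in which the PDE system of hydrodynamic type attached to $L$ becomes triangular, so I would first recall that statement from \cite{nij4} and fix notation for the companion coordinates $u^1,\dots,u^n$ and the functions $\sigma_i(u^1,\dots,u^i)$.

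Next I would write down an explicit candidate metric $g$ in these coordinates and check geodesic compatibility via the linear, covariant-derivative-free criterion \eqref{eq:geodeqv4bis}/\eqref{eq:c1bis} of Theorem~\ref{thm:0}, rather than the Sinjukov equation \eqref{eq:geodeqv0}. The natural guess is a metric that is constant (or affine) in the companion frame — something like $g_{ij} = $ a fixed anti-triangular (Hankel-type) matrix of $\pm 1$'s along the anti-diagonal, possibly corrected by lower-order terms in the $\sigma$'s; this is the metric for which $L$ written in companion form is automatically $g$-self-adjoint, because a companion matrix is self-adjoint with respect to the Hankel form built from the symmetric functions of its eigenvalues. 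With this ansatz, self-adjointness of $L$ is immediate, and condition \eqref{eq:c1bis} reduces to a finite collection of identities among the partial derivatives $\partial \sigma_i/\partial u^j$; since $\chi_L$ and the companion structure already encode $\nabla L$ in a controlled way, these identities should collapse to the triangularity constraints $\partial \sigma_i/\partial u^j = 0$ for $j>i$, which hold by the Nijenhuis assumption. Real analyticity enters only to guarantee the companion-form theorem of \cite{nij4} applies; the metric so produced is real analytic and pseudo-Riemannian (its signature is read off from the fixed Hankel skeleton, hence split or nearly split).

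The main obstacle I anticipate is pinning down the correct lower-order correction to the naive Hankel ansatz. The bare anti-diagonal form makes $L$ self-adjoint but need not satisfy \eqref{eq:c1bis} on the nose when the $\sigma_i$ genuinely vary, and across the singular strata where eigenvalues of $L$ collide one must be sure the correction terms stay regular and the form stays non-degenerate — this is precisely where the example $L=\mathrm{diag}(x,y)$ fails, and the $\gl$-regularity hypothesis must be used decisively. I expect the fix is to take $g$ to be the metric whose inverse $g^{-1}$ (or whose associated $(0,2)$-tensor) is polynomial in $L$ with coefficients the elementary symmetric functions, i.e.\ essentially $g^{ij}$ of the form $\sum_k \sigma_k (L^{n-1-k})$-type expression in companion coordinates; checking \eqref{eq:geodeqv4bis} then amounts to a single clean computation using $\mathcal L_\xi \tr L$ and the Nijenhuis identity for $L$ in companion form. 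Once that computation closes, the ``moreover'' clause is automatic, since the construction is manifestly written in terms of the second companion form. I would finish by remarking that the resulting $g$ is one distinguished member of the full family described by Theorem~\ref{thm:2}, which is needed for the later finite-dimensional reduction statements.
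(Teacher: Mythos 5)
Your proposal correctly identifies the paper's starting point --- pass to second companion coordinates, which exist for real analytic $\gl$-regular Nijenhuis operators by \cite{nij3} --- and correctly guesses the rough shape of the answer (a contravariant metric whose matrix is anti-triangular with $1$'s on the anti-diagonal and entries polynomial in the $\sigma_i$ below it). But the two essential ingredients of the proof are missing. First, you never actually produce the metric: ``Hankel skeleton plus corrections to be determined'' is not a construction. The paper's metric is completely explicit: one defines $h_1,\dots,h_n$ by expanding $\bigl(p_n L^{n-1}+\dots+p_1\operatorname{Id}\bigr)^2$ in the basis $L^{n-1},\dots,\operatorname{Id}$ (possible, and unique, precisely because $L$ is $\gl$-regular), see \eqref{id1}, and takes the leading coefficient $h_1$ as the contravariant metric. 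Second, your verification route is not the one the paper uses, and as described it does not close. The paper never checks \eqref{eq:c1bis} in coordinates; instead it establishes the differential identities $\widehat{L}^*\ddd h_i=\sigma_i\,\ddd h_1+\ddd h_{i+1}$ of Lemma \ref{m3}, deduces from the isotropy argument of Lemma \ref{m1} that $\{h_i,h_j\}=0$ on $T^*\mathsf{M}$, and then verifies the Benenti criterion \eqref{eq:geodeqvcontra} in one line using $F=h_2+\sigma_1h_1$. That Hamiltonian detour is the substance of the proof, and nothing in your outline replaces it.

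There is also a concretely false step. In second companion coordinates the Nijenhuis condition is \emph{not} the triangularity constraint $\partial\sigma_i/\partial u^j=0$ for $j>i$. Already for $n=2$, with $L=L_{\mathsf{comp2}}$ the vanishing of the torsion is equivalent to the system
\begin{equation*}
\frac{\partial\sigma_2}{\partial u^2}=\frac{\partial\sigma_1}{\partial u^1},\qquad
\frac{\partial\sigma_2}{\partial u^1}=\sigma_2\frac{\partial\sigma_1}{\partial u^2}-\sigma_1\frac{\partial\sigma_1}{\partial u^1},
\end{equation*}
which admits, e.g., $\sigma_1=u^2$, $\sigma_2=e^{u^1}$ --- a $\gl$-regular Nijenhuis operator violating your constraint. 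Consequently the claimed collapse of \eqref{eq:c1bis} ``to the triangularity constraints, which hold by the Nijenhuis assumption'' cannot occur, and the ``single clean computation'' you defer is exactly the hard part of the theorem. A direct coordinate verification of \eqref{eq:c1bis} for the correct metric may be feasible, but you would first need the explicit formula \eqref{id1} and then an argument genuinely using the vanishing of the torsion (as the Hamiltonian argument of Section \ref{sec:proofthm1} does); neither is supplied.
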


The construction of such a metric $g$ is explained in Section \ref{sec:proofthm1}, see Proposition \ref{prop:3.1} and formula \eqref{id1}.  As already noticed, the statement of  Theorem \ref{thm:1} was known only near algebraically generic points of $L$ (e.g., \cite{tams}),  so the (local) existence of a geodesically compatible partner of $L$  is a new result for singular points. Note that understanding the behaviour of geodisically equivalent metrics near singular points  is fundamentally important for their global analysis on compact manifolds and  also played a decisive role in proving the projective Lichnerowicz-Obata conjecture \cite{BMR,Lichnerowicz}. 

\begin{Remark}{\rm Let us emphasise that Theorem  \ref{thm:1}  is essentially local in the sense that a $\gl$-regular Niejnehuis operator defined on a closed manifold $\mathsf M$ may not admit any geodesically compatible metric $g$ on the whole of $\mathsf M$  (although locally such a metric can be found near each point).  One of such examples is a complex structure  $J$ on a closed orientable surface $\mathsf M^2_g$ of genus $g\ge 2$.  In dimension 2, $J$ is $\gl$-regular, however it is known that  $\mathsf M^2_g$ cannot carry non-proportional geodesically equivalent metrics. 
}\end{Remark}

Our third theorem generalises Sinjukov-Topalov hierarchy theorem \cite{splitglue, sinjukov, topalov, topalov1} and  gives a complete description of geodesically compatible partners for $\gl$-regular Nijenhuis operators in terms of their symmetries   (both at algebraically generic and singular points).

Recall that an operator $M$ is called a {\it symmetry} of $L$ if these operators commute in algebraic sense, i.e. $LM = ML$, and  the following relation holds for any vector field $\xi$:
\begin{equation} \label{eq:M1}
 M[L\xi, \xi] + L[\xi, M\xi] - [L\xi, M\xi]=0.
 \end{equation}
 If $LM=ML$ and 
 \begin{equation} \label{eq:M2}
 \langle L, M\rangle (\xi,\eta) \overset{\mathrm{def}}{=} M[L\xi, \eta] + L[\xi, M\eta] - [L\xi, M\eta]  - LM[\xi,\eta] = 0 \qquad\mbox{for all $\xi,\eta$}, 
 \end{equation}
 then $M$ is called a {\it strong symmetry} in \cite{nij4}. 

The definition of   $\langle L, M\rangle$ in  \eqref{eq:M2}  is  essentially due to Nijenhuis  \cite[formula 3.9]{nijenhuis}; it  defines  a $(1,2)$-tensor field provided that $L$ and $M$ commute.  The l.h.s. of  \eqref{eq:M1} is just  $\langle L, M\rangle (\xi,\xi)$ so that \eqref{eq:M2} implies \eqref{eq:M1}.   Also notice that the Fr\"olicher-Nijenhuis bracket of $L$ and $M$ can be written as $\langle L,M\rangle + \langle M,L\rangle$ and the Nijenhuis torsion of $L$ coincides with $\langle L, L\rangle$  (up to sign). 
   
\begin{Theorem}\label{thm:2}
Let $L$ and $g$ be geodesically compatible. Assume that
$M$ is $g$-self-adjoint and is a strong symmetry of $L$, then $L$ and $gM:= (g_{is}M^s_j)$ are geodesically compatible.

Moreover, if $L$ is $\gl$-regular, then every metric $\tilde g$ geodesically compatible with $L$ is of the form $\tilde g = gM$, where $M$ is a (strong)\footnote{As proved in \cite{nij4}, every symmetry of a $\gl$-regular Nijenhuis operator is strong.} symmetry of $L$.
\end{Theorem}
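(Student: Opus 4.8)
The plan is to reduce everything to the bilinear, derivative‑free criterion of Theorem~\ref{thm:0}, which is tailor‑made for the substitution $g\mapsto gM$: every term of \eqref{eq:geodeqv4bis} is either a Lie derivative of a scalar or the value of $g$ on a commutator of vector fields, so passing from $g$ to $gM$ and using $gM(\eta,\xi)=g(\eta,M\xi)=g(M\eta,\xi)$ (the last equality by $g$‑self‑adjointness of $M$) turns the question into a purely algebraic identity among the tensors occurring in \eqref{eq:geodeqv4bis}, in \eqref{eq:M2}, and the (vanishing) Nijenhuis torsion $\langle L,L\rangle$. For bookkeeping I would write, for any symmetric $(0,2)$‑tensor $h$ with $L$ being $h$‑self‑adjoint, $\Phi(h,L)(\xi,\eta)$ for the difference of the left‑ and right‑hand sides of \eqref{eq:geodeqv4bis} with $g$ replaced by $h$; it is linear in $h$ and in $\eta$ and quadratic in $\xi$, and by Theorem~\ref{thm:0} the pair $(L,h)$ is geodesically compatible iff $L$ is $h$‑self‑adjoint and $\Phi(h,L)\equiv 0$.

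First the elementary algebra: $gM$ is symmetric precisely when $M$ is $g$‑self‑adjoint, and then $L$ is automatically $(gM)$‑self‑adjoint because $L$ is $g$‑self‑adjoint and $LM=ML$; conversely $L$ being self‑adjoint with respect to both $g$ and $gM$ already forces $LM=ML$. The heart of the argument is the identity
\[
\Phi(gM,L)(\xi,\eta)\ =\ \bigl(\text{a universal linear combination of }\Phi(g,L)\text{ at arguments among }\xi,\eta,M\xi,M\eta\bigr)\ +\ g\bigl(\,\cdot\,,\,\langle L,M\rangle(\,\cdot\,,\,\cdot\,)\,\bigr),
\]
which I would establish by expanding each Lie derivative via $\mathcal L_X\!\left(h(Y,Z)\right)=(\mathcal L_X h)(Y,Z)+h([X,Y],Z)+h(Y,[X,Z])$, moving $M$ in and out of the slots of $g$, eliminating the mixed commutators $[L\xi,M\eta]$ and their relatives through \eqref{eq:M2}, and collecting terms; the scalar factor $\mathcal L_\xi\tr L$ is untouched by the substitution and only has to be carried along. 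Granting this identity, the first assertion is immediate: for a $g$‑self‑adjoint strong symmetry $M$ both terms on the right vanish — the first since $(L,g)$ is geodesically compatible, the second since $\langle L,M\rangle\equiv 0$ — hence $\Phi(gM,L)\equiv 0$, and together with the self‑adjointness just observed Theorem~\ref{thm:0} gives that $(L,gM)$ is geodesically compatible. (If $M$ is not invertible one obtains a geodesically compatible symmetric tensor of lower rank rather than a metric, a harmless caveat.)

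For the ``moreover'' part I run the identity backwards. Fix a geodesically compatible $g$ — one exists by Theorem~\ref{thm:1} — and, given any geodesically compatible $\tilde g$, put $M:=g^{-1}\tilde g$, so that $\tilde g=gM$, $M$ is $g$‑self‑adjoint, and $LM=ML$ as above. Both $\Phi(g,L)$ and $\Phi(\tilde g,L)=\Phi(gM,L)$ vanish, so the displayed identity forces the contraction $g(\,\cdot\,,\langle L,M\rangle(\,\cdot\,,\,\cdot\,))$ to vanish on all the arguments it reaches; since $\Phi$ is quadratic in $\xi$ and $g$ is non‑degenerate, this yields $\langle L,M\rangle(\xi,\xi)=0$ for every $\xi$, i.e.\ $M$ is a symmetry of $L$ in the sense of \eqref{eq:M1}. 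Because $L$ is $\gl$‑regular, every symmetry of $L$ is strong by \cite{nij4}, which gives the parenthetical ``(strong)'' and completes the description.

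I expect the real obstacle to be the displayed tensorial identity: one must check that after replacing $g$ by $gM$ the ``error terms'' assemble exactly into $\Phi(g,L)$‑terms and into the Nijenhuis combination $\langle L,M\rangle$, with no residual terms built from derivatives of $M$ of a type not accounted for. The commutativity $LM=ML$, the $g$‑self‑adjointness of $M$, the vanishing of $\langle L,L\rangle$, and the precise shape of \eqref{eq:geodeqv4bis}/\eqref{eq:c1bis} (in particular its symmetrisation in the two ``$\xi$‑slots'') should conspire to make this work, but the verification is where the genuine computation lives. A lesser point is the extraction step in the converse: one has to make sure that reading $\Phi(gM,L)-\Phi(g,L)=0$ against arbitrary $\xi,\eta$ and invoking non‑degeneracy of $g$ really delivers $\langle L,M\rangle(\xi,\xi)=0$ rather than some strictly weaker consequence.
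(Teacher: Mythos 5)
Your first part follows the same route as the paper: rewrite the compatibility condition in the derivative-free form \eqref{eq:geodeqv4bis}, substitute $g\mapsto gM$, and observe that the discrepancy is exactly the contraction $g\bigl(\langle M,L\rangle(\eta,\xi),\xi\bigr)$, which vanishes when $M$ is a strong symmetry. This is precisely the computation in the paper (and your displayed identity is its Remark~\ref{rem1}), so that half is sound, as is the reduction $\tilde g=gM$ with $LM=ML$ via self-adjointness with respect to both metrics.

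The gap is in the converse, at the very step you dismiss as ``a lesser point.'' What the vanishing of $\Phi(gM,L)-\Phi(g,L)$ gives you is $g\bigl(\langle M,L\rangle(\eta,\xi),\xi\bigr)=0$ for all $\eta,\xi$; here $\eta$ sits \emph{inside} the bracket $\langle M,L\rangle$ while the outer contraction is against $\xi$, so non-degeneracy of $g$ only tells you that $\langle M,L\rangle(\eta,\xi)$ is $g$-orthogonal to $\xi$ for every $\eta$ --- a strictly weaker statement than $\langle M,L\rangle(\xi,\xi)=0$, and quadraticity in $\xi$ does not bridge it. The paper closes this with a genuinely separate algebraic argument: using $\gl$-regularity to write $M=f_1L^{n-1}+\dots+f_n\operatorname{Id}$, introducing the $(1,2)$-tensor $T_M=\sum \ddd f_i\otimes L^{n-i}$, and proving via the relations \eqref{rm0}--\eqref{rm2} the symmetry identity
\begin{equation*}
g\bigl(\langle M,L\rangle(\eta,\xi),\xi\bigr)=-\,g\bigl(\eta,\langle M,L\rangle(\xi,\xi)\bigr),
\end{equation*}
after which the free slot $\eta$ really is outside the bracket and non-degeneracy of $g$ delivers $\langle M,L\rangle(\xi,\xi)=0$. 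This identity is where $\gl$-regularity enters essentially (it is needed to express $M$ as a polynomial in $L$ with function coefficients), and without it, or some substitute, your extraction step does not go through. You correctly identified the danger but did not supply the missing idea.
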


\begin{Remark}  {\rm
The first part of this theorem in a  slightly different setting was proved by N. Sinjukov \cite{sinjukov}  for  $M=L$ and later reproved and applied by 
 P.  Topalov \cite{topalov, topalov1}, who also generalised the result for $M= L^{-1}$. The case
    $M= f(L)$ for a polynomial $f$ immediately follows from the case $M=L$ and both Sinjukov and Topalov considered the case $M= L^k$ for $k\in \mathbb{N}$. 
			For an arbitrary real analytic function, the results were generalised in \cite{splitglue}, see also \cite[Theorem 3]{topalovnew}.   For diagonalisable $\gl$-regular Nijenhuis operators $L$, every symmetry of $L$ has the form  $f(L)$ for some smooth function $f$  so all  metrics geodesically compatible with such an operator $L$ form generalised Sinjukov-Topalov hierarchy in the terminology  of \cite{splitglue}. 
				However,  if $\gl$-regular  $L$  contains non-trivial Jordan blocks, then there exist strong symmetries that cannot be presented in the form $f(L)$, see \cite{nij4}.  
}\end{Remark}

For a given Nijenhuis operator $L$, we define the operator fields $A_i$ by the following recursion relations
\begin{equation}\label{rec}
A_0 = \operatorname{Id}, \quad A_{i + 1} = L A_i - \sigma_i \operatorname{Id}, \quad i = 0, \dots, n - 1,     
\end{equation}
where functions  $\sigma_i$ are coefficients of the characteristic polynomial of $L$ numerated as below: 
\begin{equation} \label{def:sigma}
\chi_L(\lambda)=\det(\lambda\,\Id - L)=\lambda^n - \sigma_1\lambda^{n-1} - \dots - \sigma_n.
\end{equation}
Equivalently, the operators $A_i$ can be defined from the matrix relation
$$
\det (\lambda\,\Id-L) \cdot (\lambda\,\Id - L)^{-1} =  \lambda^{n-1} A_0 +  \lambda^{n-2} A_1 +\dots + \lambda A_{n-2} + A_{n-1}.
$$

Consider the following system of quasilinear PDEs  defined by these operators 
\begin{equation}
\label{eq:syst8}
\begin{aligned}
u_{t_1} &= A_1\, u_x,\\
  &\dots\\
u_{t_{n-1}} &= A_{n-1}\, u_x,
\end{aligned}
\end{equation}
with $u^i=u^i(x, t_1,...,t_{n-1})$ being unknown functions in $n$ variables and $u = (u^1,\dots, u^n)^\top$.

The system \eqref{eq:syst8} can be obtained within the framework of the general construction
introduced by F. Magri and P. Lorenzoni in \cite{ml, m}. In particular, it is consistent (in the real-analytic category) in the sense 
 that  for any initial curve
$\gamma(x)$ there exists a solution $u=u(x, t_1,\dots,t_{n-1})$ such that $u(x, 0, \dots, 0) = \gamma(x)$. 
In the case of a diagonal Nijenhuis operator $L$, the corresponding system satisfies the semihamiltonicity
condition of S.\,Tsarev \cite{Tsarev} and is weakly-nonlinear in the sense of B.\,Rozhdestvenskii et al \cite{rs}.
Such systems were studied and integrated in quadratures in the diagonal case by E.\,Ferapontov\footnote{Ferapontov's result is more general, he explicitly integrated all diagonal weakly-nonlinear semihamiltonian systems} \cite{fer, fer1,FF1997} and by K.\,Marciniak and M.\,Blaszak in \cite{MB2010}, see also \cite{Bl1, Bl2}. The general, not necessarily diagonalisable, case was done  in \cite{nij4}.

The next portion of our results concerns finite-dimensional reductions of system \eqref{eq:syst8}.  Various types of 
finite-dimensional reductions of infinite-dimensional nonlinear integrable systems have been investigated since
the middle of 70s, see e.g. \cite{AFW, BN, hone,  MB2010, Veselov}. Informally,  a finite-dimensional reduction of an integrable PDE system 
is a subsystem of it, which is finite-dimensional and still integrable. The first of the following two theorems states that the set of geodesics of any metric geodesically compatible with a $\gl$-regular Nijenhuis operator $L$ is invariant with respect to the flow of \eqref{eq:syst8}. That is, by fixing a metric $g$ geodesically compatible with $L$, we obtain a reduction of our infinite dimensional system to the set of  $g$-geodesics, which can be naturally endowed with the structure of a smooth  manifold of dimension $2n$.

  Next, we show that  the restriction of our system to the set of $g$-geodesics is equivalent, in the natural sense, to the Poisson action generated by the quadratic integrals of the geodesic flow which are closely related to the operators $A_i$ \cite{eqm, MT}.  Namely,  if $g$ is geodesically compatible with $L$, then its geodesic flow (as a Hamiltonian system on $T^*\mathsf{M}$)   admits $n$ commuting first integrals  $F_0,\dots, F_{n-1}$  of the form
\begin{equation}
\label{eq:integrals}
F_i(u,p) = \tfrac{1}{2}\, g^{-1} (A_i^* p, p). 
\end{equation}

\begin{Theorem}\label{thm:3a}  Consider any  metric $g$ geodesically compatible with $L$ and take any  geodesic  
 $\gamma(x)$ of this metric. Let  $u(x, t_1,...,t_{n-1})$ be the solution of \eqref{eq:syst8} with the initial condition $u(x,0,..., 0)=\gamma(x)$. Then  for any (sufficiently small) 
 $t_1,...,t_{n-1}$, the curve $x \mapsto   u(x,t_1,..., t_{n-1})$ is a geodesic of $g$. 
\end{Theorem}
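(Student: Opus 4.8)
The plan is to lift the whole picture to $T^{*}\mathsf M$ and to recognise the evolution \eqref{eq:syst8} along a geodesic as the base projection of the commuting Hamiltonian flows generated by the integrals $F_0,\dots,F_{n-1}$ of \eqref{eq:integrals}. Two preliminary observations set this up. First, since every $A_i$ is a polynomial in $L$ (by the recursion \eqref{rec}) and $L$ is $g$-self-adjoint, each $A_i$ is $g$-self-adjoint, so the functions $F_i=\tfrac12\,g^{-1}(A_i^{*}p,p)$ are well defined on $T^{*}\mathsf M$, with $F_0=H=\tfrac12\,g^{-1}(p,p)$ the geodesic Hamiltonian. Second, by geodesic compatibility of $g$ and $L$ one has $\{F_i,F_j\}=0$ for all $i,j$ (this is precisely the content of \cite{eqm, MT}), so the map $\Phi_{t_1,\dots,t_{n-1}}=\phi^{F_1}_{t_1}\circ\dots\circ\phi^{F_{n-1}}_{t_{n-1}}$ is a well-defined local $\mathbb{R}^{n-1}$-action on $T^{*}\mathsf M$ which commutes with the geodesic flow $\phi^{H}_{s}$.

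First I would take the geodesic $\gamma$ with an affine parameter and lift it to $\hat\gamma(x)=\bigl(\gamma(x),\,g(\dot\gamma(x),\cdot)\bigr)\subset T^{*}\mathsf M$, which is exactly an integral curve of the geodesic vector field $X_H$. Because $\Phi_{t_1,\dots,t_{n-1}}$ commutes with $\phi^{H}_{s}$, its differential carries $X_H$ to $X_H$; therefore $\hat\gamma_{t_1,\dots,t_{n-1}}:=\Phi_{t_1,\dots,t_{n-1}}\circ\hat\gamma$ is again an integral curve of $X_H$ for all sufficiently small $t_1,\dots,t_{n-1}$. Consequently its base curve $\gamma_{t_1,\dots,t_{n-1}}$ is an (affinely parametrised) geodesic of $g$, and along $\hat\gamma_{t_1,\dots,t_{n-1}}$ the fibre coordinate equals $g(\partial_x\gamma_{t_1,\dots,t_{n-1}},\cdot)$, since this is how momentum and velocity are tied along $X_H$-trajectories.

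Next I would compute the base component of $X_{F_i}$: as $F_i$ is quadratic in $p$ with $g$-self-adjoint $A_i$, one has $\partial F_i/\partial p=g^{-1}A_i^{*}p=A_i\,g^{-1}(p)$, so evaluating along $\hat\gamma_{t_1,\dots,t_{n-1}}$ and using $g^{-1}(\text{fibre})=\partial_x\gamma_{t_1,\dots,t_{n-1}}$ from the previous step, this base component equals $A_i\,\partial_x\gamma_{t_1,\dots,t_{n-1}}$. Hence $u(x,t_1,\dots,t_{n-1}):=\gamma_{t_1,\dots,t_{n-1}}(x)$ satisfies $u_{t_i}=A_iu_x$ with $u(x,0,\dots,0)=\gamma(x)$; it therefore coincides with the (unique, real-analytic) solution of \eqref{eq:syst8} with the prescribed initial data, and by construction each curve $x\mapsto u(x,t_1,\dots,t_{n-1})$ is a geodesic of $g$, which is the assertion.

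The step that needs care is the identification of the base component of $X_{F_i}$ with the operator $A_i$ acting on $u_x$: it relies simultaneously on $g$-self-adjointness of $A_i$ and on the momentum along an $X_H$-trajectory being precisely $g(\dot\gamma)$, and it closes up only when $\gamma$ is affinely parametrised, which is harmless because being a geodesic is a property of the unparametrised curve. If one prefers to avoid the cotangent bundle, the same conclusion can be reached by a direct local computation: differentiate the geodesicity defect $(\nabla_{u_x}u_x)\wedge u_x$ (the obstruction to $\nabla_{u_x}u_x$ being proportional to $u_x$) in $t_i$ using $u_{t_i}=A_iu_x$, and show, with the geodesic-compatibility identity of Theorem \ref{thm:0} and the vanishing of the Nijenhuis torsion of $L$ (which, through the recursion \eqref{rec}, yields the corresponding identities for the $A_i$), that $\partial_{t_i}$ of this defect is a pointwise-linear function of the defect itself and hence stays zero. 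This second route is more computational but does not depend on any choice of parameter.
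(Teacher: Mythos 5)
Your proposal is correct and follows essentially the same route as the paper: the paper also lifts to $T^*\mathsf{M}$, uses the commuting quadratic integrals $F_0=H,F_1,\dots,F_{n-1}$ from geodesic compatibility to generate a Hamiltonian $\R^n$-action whose orbits project to solutions of \eqref{eq:syst8} (its Proposition \ref{prop:basic} and Corollary \ref{cor:1} are exactly your computation of the base component of $X_{F_i}$), and concludes that the $x$-slices remain orbits of the geodesic flow. The only cosmetic difference is the order of the two observations (geodesy preserved vs.\ identification with the PDE solution), so no further comparison is needed.
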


In other words, the evolutionary system corresponding to any of the equations from  \eqref{eq:syst8} sends geodesics of $g$ to geodesics. 
Let us consider the space $\mathfrak{G}$ of all $g$-geodesics  (viewed as parameterised curves). This set has a natural structure of a $2n$-dimensional manifold. By Theorem \ref{thm:3a},  system  \eqref{eq:syst8} defines a local action 
of  $\mathbb{R}^n$ on $\mathfrak{G}$:
$$
\Psi^{t_0,t_1,\dots,t_{n-1}}:  \mathfrak{G} \to \mathfrak{G}, \qquad (t_0,t_1,\dots,t_{n-1})\in\R^n.
$$    
More precisely,  if $\gamma=\gamma(x) \in  \mathfrak{G}$ is a $g$-geodesic, then
we set $\Psi^{t_0,t_1,\dots,t_{n-1}}(\gamma)$ to be the $g$-geodesic $\tilde\gamma(x) =  u(x+t_0,t_1,..., t_{n-1})$,  where  $u(x, t_1,...,t_{n-1})$ is the solution of \eqref{eq:syst8} with the initial condition $u(x,0,..., 0)=\gamma(x)$.

\begin{Theorem}\label{thm:3b}   The action $\Psi$ is conjugate to the Hamiltonian action of $\mathbb{R}^n$ on $T^*{\mathsf{M}}$ generated by the flows of the integrals $F_0,\dots,F_{n-1}$ defined by \eqref{eq:integrals}. The conjugacy is given by $\gamma \in \mathfrak{G} \mapsto (\gamma(0), g_{ij}\dot\gamma^i(0)) \in T^*{\mathsf{M}}$. 
\end{Theorem}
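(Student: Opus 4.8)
\emph{Proof plan.} The plan is to exhibit the conjugacy explicitly as the map
$\Phi\colon\mathfrak{G}\to T^*\mathsf{M}$, $\Phi(\gamma)=\bigl(\gamma(0),\,g_{ij}(\gamma(0))\,\dot\gamma^i(0)\bigr)$,
and then to reduce the statement to a comparison of infinitesimal generators. First I would record that $\Phi$ is a local diffeomorphism: an affinely parametrised geodesic is uniquely determined by its $1$-jet at $x=0$, and lowering an index by $g$ identifies $T\mathsf{M}$ with $T^*\mathsf{M}$, so $\Phi$ maps the $2n$-dimensional manifold $\mathfrak{G}$ bijectively onto a neighbourhood in $T^*\mathsf{M}$ with smooth inverse. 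Both sides carry a local $\mathbb{R}^n$-action: on $\mathfrak{G}$, the action $\Psi$ of Theorem \ref{thm:3a}, whose $t_0$-direction is translation of the parameter $x$ and whose $t_i$-directions are the flows of \eqref{eq:syst8} (these commute because \eqref{eq:syst8} is consistent and autonomous in $x$); on $T^*\mathsf{M}$, the Hamiltonian action of the flows $\phi^{t_a}_{F_a}$, which is well defined because the integrals $F_a$ pairwise Poisson-commute (the fact recalled before the statement, cf.\ \cite{eqm,MT}). Since an $\mathbb{R}^n$-action is determined by its infinitesimal generators, it suffices to prove $\Phi_*V_a=X_{F_a}$ for $a=0,1,\dots,n-1$, where $V_0$ generates the $x$-translation and $V_i$ ($i\ge 1$) the $t_i$-flow.

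The case $a=0$ is the classical identification of the co-geodesic flow with the Hamiltonian flow of the kinetic energy: since $A_0=\mathrm{Id}$ we have $F_0=\tfrac12\,g^{-1}(p,p)$, and by the definition of $\Phi$ the translated geodesic $\Psi^{t_0,0,\dots,0}\gamma=\gamma(\,\cdot+t_0)$ is sent to $\bigl(\gamma(t_0),g\dot\gamma(t_0)\bigr)=\phi^{t_0}_{F_0}(\Phi(\gamma))$. For $a=i\in\{1,\dots,n-1\}$ I would fix $\gamma\in\mathfrak{G}$, let $u(x,t_1,\dots,t_{n-1})$ be the solution of \eqref{eq:syst8} with $u(x,0,\dots,0)=\gamma(x)$, set $p_a(x,t):=g_{ab}(u)\,u^b_x$, and use Theorem \ref{thm:3a} to note that for each fixed $t$ the curve $x\mapsto\bigl(u(x,t),p(x,t)\bigr)$ is an integral curve of $X_{F_0}$ (equivalently $u^b_{xx}=-\Gamma^b_{cd}u^c_x u^d_x$). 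Then $\Phi_*V_i|_\gamma$, read at the point $\Phi(\gamma)$, is the pair $\bigl(\partial_{t_i}u^a,\,\partial_{t_i}p_a\bigr)$ evaluated at $x=0,\,t=0$. Its $q$-component is $\partial_{t_i}u^a=(A_i)^a_b u^b_x$ by \eqref{eq:syst8}; since $A_i$ is a polynomial in the $g$-self-adjoint operator $L$ it is itself $g$-self-adjoint, and a short computation gives $\partial F_i/\partial p_a=(A_i)^a_b u^b_x$ as well, so the $q$-components coincide.

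The hard part will be matching the $p$-components. Expanding $\partial_{t_i}p_a=(\partial_c g_{ab})\,u^c_{t_i}u^b_x+g_{ab}\,u^b_{xt_i}$, substituting $u^c_{t_i}=(A_i)^c_d u^d_x$ and $u^b_{xt_i}=\partial_x\bigl((A_i)^b_d u^d_x\bigr)$, and eliminating the second $x$-derivatives by the geodesic equation for $x\mapsto u(x,t)$, one obtains an expression quadratic in $u_x$ assembled from $\partial g$, $\nabla A_i$ and the Christoffel symbols; the claim is that it equals $-\partial F_i/\partial q^a$. I expect this to come out by a direct computation that feeds in the geodesic compatibility equation \eqref{eq:c1bis} (equivalently \eqref{eq:geodeqv0}), differentiated through the recursion \eqref{rec} so as to express $\nabla A_i$, together with the first-order form $\partial_x p_a=-\partial_a F_0$ of the geodesic equation just used. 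Conceptually this identity only re-expresses, pointwise along a single geodesic, the two known facts that the $A_i$ are the Killing-type tensors attached to $(g,L)$ and that the $F_i$ are quadratic integrals of the $g$-geodesic flow; the sole genuine obstacle is the index bookkeeping of the $\nabla A_i$ and Christoffel terms, which I would arrange carefully rather than expect to need any new idea.

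To finish, it is convenient to promote this generator comparison to the pointwise identity $\partial_{t_i}\bigl(u(x,t),p(x,t)\bigr)=X_{F_i}\bigl(u(x,t),p(x,t)\bigr)$, valid for all $(x,t)$ along the solution. Since $\partial_x$ and $\partial_{t_i}$ commute, since each $x$-slice is an integral curve of $X_{F_0}$, and since $\{F_0,F_i\}=0$, this identity for all $x$ follows from its validity at $x=0$; and at $x=0$ it integrates to $\Phi\bigl(\Psi^{0,\dots,t_i,\dots,0}\gamma\bigr)=\phi^{t_i}_{F_i}(\Phi(\gamma))$, i.e.\ $\Phi_*V_i=X_{F_i}$. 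Together with the case $a=0$, and with the fact that both $\mathbb{R}^n$-actions are generated by their commuting infinitesimal generators, this shows that $\Phi$ conjugates $\Psi$ to the Hamiltonian action generated by $F_0,\dots,F_{n-1}$, which is the assertion of the theorem.
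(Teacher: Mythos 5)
Your conjugating map is the right one and the overall strategy (identify $\mathfrak G$ with $T^*\mathsf M$ via initial data, then match infinitesimal generators) is sound; the $a=0$ case and the $q$-component computation $\partial F_i/\partial p_a=(A_i)^a_b u^b_x$ are correct. But the proof is not complete: the entire content of the theorem sits in the $p$-component identity $\partial_{t_i}\bigl(g_{ab}(u)u^b_x\bigr)=-\partial F_i/\partial u^a$, and you do not prove it — you only assert that it should follow from ``index bookkeeping'' fed by the geodesic compatibility equation and the recursion \eqref{rec}. That computation is exactly the statement that the lift $x\mapsto(u,g\,u_x)$ of the PDE flow is the Hamiltonian flow of $F_i$, i.e.\ it is the theorem itself in coordinates; leaving it as an expectation leaves the proof with a genuine hole. (I believe the identity does close, but it is not a formality: one has to combine the Killing equation for $A_i$, the geodesic equation in $x$, and the symmetrized derivative of $B_i^{ce}=(A_i)^c_f g^{fe}$, and nothing in your sketch guarantees in advance that no extra input is needed.)

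The paper avoids this computation entirely, and you should too: reverse the direction of the argument. Let $(\tilde u,\tilde p)(x,t)=\Phi^{t_i}_{F_i}\circ\Phi^{x}_{F_0}(u_0,p_0)$ be the Hamiltonian orbit through the initial covector of $\gamma$. Along it, $\partial_x\tilde u=\partial F_0/\partial p=g^{-1}\tilde p$ forces $\tilde p=g\,\tilde u_x$, and then $\partial_{t_i}\tilde u=\partial F_i/\partial p=A_i g^{-1}\tilde p=A_i\tilde u_x$, so $\tilde u$ solves \eqref{eq:syst8} with initial curve $\gamma$ (this is Proposition \ref{prop:basic} and Corollary \ref{cor:1} in the paper). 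By uniqueness of the real-analytic Cauchy problem for \eqref{eq:syst8}, $\tilde u$ coincides with your $u$, hence $\tilde p=g\,u_x=p$, and the identity $\partial_{t_i}(u,p)=X_{F_i}(u,p)$ — the step you left open — is automatic. Your proposal is missing either this uniqueness argument or the explicit verification; one of the two is required, and the former is both shorter and the one the paper uses.
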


\begin{Remark}{\rm 
Strictly speaking,  the action $\Psi$ is {\it local}, since the solutions $u(x, t_1,\dots, t_{n-1})$ of \eqref{eq:syst8} are, in general, defined only for small values of $t_i$ and $x$ from a certain (perhaps, small) interval.  However,  if the  Hamiltonian flows of $F_0,\dots, F_{n-1}$ are complete, then Theorem \ref{thm:3b} becomes global in the sense that the initial geodesic $\gamma(x)$ is defined for all $x\in\R$ and 
$(t_0,t_1,\dots, t_{n-1})\in\R^n$ is arbitrary.
}\end{Remark}

Let $L$ be a $\gl$-regular real analytic Nijenhuis operator, then it follows from \cite{nij4}  that  for every curve $\gamma$ with a cyclic velocity vector there exists a metric $g$ geodesically compatible with $L$ such that  $\gamma$ is a $g$-geodesic.
Thus, the finite-dimensional reductions of \eqref{eq:syst8}  provided by Theorems \ref{thm:3a} and \ref{thm:3b} `cover' almost all (local) solutions of the Cauchy problem. Indeed,  every generic initial curve belongs to a suitable set $\mathfrak G$ of geodesics from Theorem \ref{thm:3b}.

%%%%%%%%%%%%%%%%%%%%%%%%%%%%%%%%%%
%%%%%%%%%%%%%%%%%%%%%%%%%%%%%%%%%%
%%%%%%%%%%%%%%%%%%%%%%%%%%%%%%%%%%
%%%%%%%%%%%%%%%%%%%%%%%%%%%%%%%%%%
%%%%%%%%%%%%%%%%%%%%%%%%%%%%%%%%%%
%%%%%%%%%%%%%%%%%%%%%%%%%%%%%%%%%%
%%%%%%%%%%%%%%%%%%%%%%%%%%%%%%%%%%
%%%%%%%%%%%%%%%%%%%%%%%%%%%%%%%%%%

\section{Proof of Theorem \ref{thm:0}}

Recall that $L$ is assumed to be $g$-self-adjoint.
We use the geodesic compatibility condition for $g$ and $L$ in the form  \eqref{eq:geodeqv0}.
The operators in the left  hand side and the right hand side of  \eqref{eq:geodeqv0} are both $g$-self-adjoint, so that we can equivalently rewrite this relation as
\begin{equation}
\label{eq:geodeqv3}
g\bigl( (\nabla_\eta L) \xi, \xi\bigr) =    g( \eta  , \xi)  \, \mathcal L_\xi \tr L      \quad \mbox{for all tangent vectors $\eta$ and $\xi$.}   
\end{equation}
Then we have
$$
g\bigl( (\nabla_\eta L) \xi, \xi\bigr) = g \bigl(   \nabla_\eta (L\xi) - L\nabla_\eta \xi, \xi     \bigr)=
g \bigl( [\eta, L\xi] + \nabla_{L\xi} \eta   - L ([\eta,\xi] +\nabla_\xi \eta), \xi     \bigr)=
$$
$$
 g\bigl( \nabla_{L\xi} \eta, \xi\bigr)  -  g \bigl(\nabla_\xi \eta, L\xi\bigr)  + g \bigl( [\eta, L\xi], \xi \bigl) -  g \bigl( [\eta,\xi], L\xi\bigr)  =
$$
$$
\nabla_{L\xi} \bigl(g(  \eta, \xi)\bigr) - g\bigl(  \eta, \nabla_{L\xi}\xi\bigr)  -  \nabla_\xi \bigl(g ( \eta, L\xi)\bigr) + g \bigl( \eta, \nabla_\xi L\xi\bigr)  + g \bigl( [\eta, L\xi], \xi \bigl) -  g \bigl( [\eta,\xi], L\xi\bigr)  =
$$
$$
\nabla_{L\xi} \bigl(g(  \eta, \xi)\bigr) -  \nabla_\xi \bigl(g ( \eta, L\xi)\bigr) - g\bigl(  \eta, \nabla_{L\xi}\xi - \nabla_\xi L\xi\bigr)   + g \bigl( [\eta, L\xi], \xi \bigl) -  g \bigl( [\eta,\xi], L\xi\bigr)  =
$$
$$
\mathcal L_{L\xi} \bigl(g(  \eta, \xi)\bigr) -  \mathcal L_\xi \bigl(g ( \eta, L\xi)\bigr) - g\bigl(  \eta, [L\xi ,\xi]\bigr)   + g \bigl( [\eta, L\xi], \xi \bigl) -  g \bigl( [\eta,\xi], L\xi\bigr),
$$
which leads to \eqref{eq:geodeqv4bis}, as required.

In local coordinates, condition  \eqref{eq:geodeqv0} takes the form:
$$
\frac{\partial L^m_j}{\partial x^i} + \Gamma_{i\alpha}^m L^\alpha_j - \Gamma_{ij}^\alpha L_\alpha^m = \frac{1}{2} ( l_j \delta_i^m + g^{\alpha m} l_\alpha g_{ij}), \quad \mbox{where }  l_j = \frac{\partial \tr L}{\partial x^j}.
$$
Multiplying by $g_{km}$ (and summing over $m$) gives
$$
g_{km}\frac{\partial L^m_j}{\partial x^i} + \Gamma_{i\alpha, k} L^\alpha_j - g_{km}\Gamma_{ij}^\alpha L_\alpha^m = \frac{1}{2} ( l_j g_{ik} + l_k g_{ij}).
$$
Using the fact that $L$ is $g$-self-adjoint  (i.e.,  $g_{km}L^m_\alpha = g_{\alpha m}L^m_k$), we get (after cosmetic changes of some summation indices) 
$$
g_{k\alpha} \frac{\partial L^\alpha_j}{\partial x^i} + \Gamma_{i\alpha,k} L^\alpha_j - \Gamma_{ij,\alpha}L^\alpha_k = \frac{1}{2} ( l_j g_{ik} + l_k g_{ij}).
$$
In more detail,
$$
g_{k\alpha} \frac{\partial L^\alpha_j}{\partial x^i} + \frac{1}{2}\left( \frac{\partial g_{ik}}{\partial x^\alpha} +  \frac{\partial g_{k\alpha}}{\partial x^i}  -  \frac{\partial g_{i\alpha}}{\partial x^k} \right) L^\alpha_j - \frac{1}{2} \left(  \frac{\partial g_{i\alpha }}{\partial x^j}   + 
         \frac{\partial g_{j\alpha }}{\partial x^i}  - 
         \frac{\partial g_{ij}}{\partial x^\alpha}  \right)L^\alpha_k = \frac{1}{2} ( l_j g_{ik} + l_k g_{ij}).
$$
Since $L$ is $g$-self-adjoint, we can rewrite it as follows: 
\begin{equation}
\label{eq:qeodeqv2}
\frac{1}{2} g_{k\alpha} \frac{\partial L^\alpha_j}{\partial x^i} +
\frac{1}{2} g_{\alpha j} \frac{\partial L^\alpha_k}{\partial x^i} 
+ \frac{1}{2}\left( \frac{\partial g_{ik}}{\partial x^\alpha}   -  \frac{\partial g_{i\alpha}}{\partial x^k} \right) L^\alpha_j + \frac{1}{2} \left( \frac{\partial g_{ij}}{\partial x^\alpha}    
-   \frac{\partial g_{i\alpha }}{\partial x^j}  \right)L^\alpha_k = \frac{1}{2} ( l_j g_{ik} + l_k g_{ij}),
\end{equation}
which makes it clear that the left hand side and right hand side of this relation are both symmetric in indices $j$ and $k$. Up to the factor $\frac{1}{2}$, this relation coincides with \eqref{eq:c1bis},  as needed.

\section{Proof of Theorem \ref{thm:1}} \label{sec:proofthm1}

Let $L$ be a $\gl$-regular Nijenhuis operator.  We start with a purely algebraic construction leading to a metric $h$ that is geodesically compatible with $L$. This construction is performed in specific coordinates (and $h$ will depend on the choice of such coordinates).

First, fix second companion coordinates $u^1, \dots, u^n$ of $L$ so that 
$$
L = L_{\mathsf{comp2}} = \left(\begin{array}{ccccc}
     0 & 1 & 0 & \dots & 0  \\
     0 &  & 1 & \dots & 0  \\
     & & & \ddots & \\
     0 & 0 & 0 & \dots & 1 \\
     \sigma_n & \sigma_{n - 1} & \sigma_{n - 2} & \dots & \sigma_1
\end{array}\right).
$$
 Recall that for real analytic $\gl$-regular Nijenhuis operators such coordinates always exist \cite{nij3}\footnote{It is an open question whether such coordinates exist for any {\it smooth} $\gl$-regular Nijenhuis operator $L$. By \cite{nij3}, it is the case if $L$ is algebraically generic, i.e. its eigenvalues have constant multiplicities.}.

Let $p_1, \dots, p_n, u^1, \dots, u^n$ be the corresponding canonical coordinates on the cotangent bundle
and consider the following algebraic identity 
\begin{equation}\label{id1}
h_1 L^{n - 1} + \dots + h_n \operatorname{Id} = \Big(p_n L^{n - 1} + \dots + p_1 \operatorname{Id}\Big)^2.    
\end{equation}
The operator in the right hand side commutes with $L$ and therefore can be uniquely written as a linear combination of $\Id, L, \dots , L^{n-1}$ (this is a characteristic properties of $\gl$-regular operators \cite{nij4})  so that the functions $h_1, \dots, h_n$ are uniquely defined. In fact, they are quadratic in $p_1,\dots,p_n$ and their coefficients are polynomials in $\sigma_i$'s.  Using the fact that $L=L_{\mathsf{comp2}}$, one can easily check that  $h_n, \dots, h_1$  are the elements of the first row of the matrix $\Big(p_n L^{n - 1} + \dots + p_1 \operatorname{Id}\Big)^2$.  

The statement of Theorem \ref{thm:1}  follows from 

\begin{Proposition} \label{prop:3.1}
The quadratic function $h_1(u,p)$ defines a non-degenerate (contravariant) metric which is 
geodesically compatible with $L=L_{\mathsf{comp2}}$. 
\end{Proposition}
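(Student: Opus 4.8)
The plan is to verify the two assertions of the Proposition directly in the second companion coordinates: first that $h_1(u,p)$ is a non-degenerate quadratic form on the fibres of $T^*\mathsf M$ (i.e.\ defines a contravariant metric $g^{-1}$), and second that this metric is geodesically compatible with $L=L_{\mathsf{comp2}}$. For the non-degeneracy I would compute the matrix of the quadratic form $h_1$ explicitly. Since $h_n,\dots,h_1$ are the entries of the first row of $\bigl(p_nL^{n-1}+\dots+p_1\Id\bigr)^2$, and since the powers $\Id,L,\dots,L^{n-1}$ of the companion matrix have a transparent combinatorial structure, $h_1$ will come out as a quadratic form in $p_1,\dots,p_n$ whose coefficient matrix $g^{-1}=(g^{ij})$ has a Hankel-type shape: its antidiagonal entries are $1$ (or $\pm1$) and the remaining entries are polynomials in $\sigma_1,\dots,\sigma_n$. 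Such a matrix is manifestly non-degenerate (its determinant is $\pm1$), which settles the first claim and also identifies the signature as split.

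For geodesic compatibility I would use Theorem~\ref{thm:0}, or rather its contravariant reformulation, which is the natural one here because $h_1$ is already given as a function on $T^*\mathsf M$. Concretely: the quadratic-in-$p$ function $F(u,p)=\tfrac12 g^{ij}p_ip_j$ is the Hamiltonian of the geodesic flow, and geodesic compatibility of $g$ with $L$ is equivalent to the statement that the functions $F_i(u,p)=\tfrac12\,g^{-1}(A_i^*p,p)$ from \eqref{eq:integrals} Poisson-commute and generate the right structure — equivalently, to the single PDE \eqref{eq:c1bis}. So the concrete task is to substitute $L=L_{\mathsf{comp2}}$ and the explicit matrix $g^{ij}$ into \eqref{eq:c1bis} and check the identity. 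Here the key simplification is that in second companion coordinates $L$ has constant entries except the last row, and that last row is $(\sigma_n,\dots,\sigma_1)$ where, by the defining property of second companion coordinates, $\sigma_i$ depends only on $u^{?}$ in a controlled way (the Nijenhuis/companion structure forces $\partial\sigma_i/\partial u^j$ to vanish for most pairs $(i,j)$). This is exactly what makes the derivatives $\partial L^\alpha_j/\partial x^i$ and $\partial g_{ik}/\partial x^\alpha$ in \eqref{eq:c1bis} computable in closed form.

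An alternative, possibly cleaner, route is to recognise the right-hand side of \eqref{id1} as coming from the ``squaring'' construction on the cotangent bundle: the map $p\mapsto (p_nL^{n-1}+\dots+p_1\Id)$ identifies the fibre with the commutant of $L$, squaring is a quadratic map there, and taking the $A_0$-component (the first row, i.e.\ $h_1$) is a fixed linear projection. One then shows that the pencil $F_i$ obtained from $g^{-1}=h_1$ via \eqref{eq:integrals} is precisely the pencil of ``shifted'' quadratic Hamiltonians associated with the companion structure, whose involutivity is a known fact about $\gl$-regular operators (and is essentially the content of \cite{nij4}); involutivity of this pencil is equivalent to geodesic compatibility by \cite{eqm, MT}. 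I would likely present the direct computation as the main argument and mention this conceptual reformulation as a remark.

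The main obstacle I expect is the bookkeeping in the direct substitution into \eqref{eq:c1bis}: one must handle the symmetrisation $(j\leftrightarrow k)$, keep track of which $\partial\sigma_i/\partial u^j$ are nonzero, and correctly extract the coefficients of $g^{ij}$ from the square $(p_nL^{n-1}+\dots+p_1\Id)^2$ — i.e.\ understand the structure constants of the algebra $\mathbb R[L]=\mathbb R[\lambda]/(\chi_L(\lambda))$ in the basis $\Id,L,\dots,L^{n-1}$. Once the matrix $g^{ij}$ and the non-trivial derivatives of $\sigma_i$ are written down explicitly, \eqref{eq:c1bis} should reduce to a polynomial identity in $\sigma_1,\dots,\sigma_n$ that can be checked term by term; verifying that everything collapses correctly, rather than any single conceptual point, is where the real work lies. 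A secondary point to be careful about is that \eqref{eq:c1bis} is written in covariant form, so I should either first lower indices consistently (using that $L$ is automatically $g$-self-adjoint here, because $g^{-1}$ and $L$ are simultaneously built from the commutative algebra $\mathbb R[L]$) or work throughout with the contravariant version of the compatibility equation.
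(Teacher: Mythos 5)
Your non-degeneracy argument is the paper's own (Lemma \ref{m2}): differentiating \eqref{id1} twice in $p$ gives $\sum_k \frac{\partial^2 h_k}{\partial p_i\partial p_j}L^{n-k}=L^{i+j-2}$, so $h_1^{ij}$ vanishes for $i+j\le n$ and equals $1$ on the anti-diagonal; that part is fine. For compatibility, the route you relegate to a remark is the one the paper actually takes: it verifies the contravariant Benenti condition \eqref{eq:geodeqvcontra}, $\{H,F\}=2H\,\ddd(\tr L)(h_1 p)$ with $F=h_2+\sigma_1 h_1$, rather than substituting into \eqref{eq:c1bis}.

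The genuine gap is that in neither of your two routes do you supply the step that carries all the weight. In the ``conceptual'' route, everything reduces to the Poisson commutativity $\{h_i,h_j\}=0$ of the specific functions defined by \eqref{id1}; you assert this is ``a known fact about $\gl$-regular operators, essentially the content of \cite{nij4}'', but it is not: these $h_i$ are built from the square $\bigl(\sum p_k L^{k-1}\bigr)^2$ and their involutivity is proved in this paper, via the differential identities $\widehat L^*\ddd h_i=\sigma_i\,\ddd h_1+\ddd h_{i+1}$ of Lemma \ref{m3} (whose $u$-part does use \cite[Lemma 2.3]{nij4}, but whose $p$-part is a separate computation with the companion matrix) combined with the isotropy argument of Lemma \ref{m1} for the $\Omega^{-1}$-symmetric operator $\widehat L^*$. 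Without some substitute for Lemma \ref{m3} you have no proof. The same identities are also what the paper uses to establish condition (ii), that $L$ is $h_1$-self-adjoint (via $h_1^{ks}L^j_s=\sigma_1 h_1^{kj}+h_2^{kj}$); your one-line justification ``$g^{-1}$ and $L$ are simultaneously built from $\mathbb R[L]$'' does not by itself yield symmetry of $h_1^{ks}L^j_s$. Finally, your primary route (direct substitution into \eqref{eq:c1bis}) rests on the claim that in second companion coordinates ``$\partial\sigma_i/\partial u^j$ vanishes for most pairs''; this is not what the Nijenhuis condition gives (it gives the recursion $L^*\bar\ddd\sigma_i=\sigma_i\bar\ddd\sigma_1+\bar\ddd\sigma_{i+1}$, not sparsity), so that computation as described would not close either.
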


To prove this proposition we need to verify three conditions:
\begin{itemize}
\item[(i)]  the quadratic form $h_1(u,p) = \sum h_1^{\alpha\beta}(u) p_\alpha p_\beta$ is non-degenerate,
\item[(ii)]  $L$ is $h_1$-self-adjoint,
\item[(iii)]  $L$ and $h_1$ satisfy the geodesic compatibility condition.
\end{itemize}

\begin{Lemma}\label{m2}
The contravariant quadratic form $h_i$ is non-degenerate, i.e., $\det\Bigl( h^{\alpha\beta}_1 (u) \Bigr)\ne 0$.
\end{Lemma}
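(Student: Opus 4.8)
The plan is to compute the Gram matrix $\bigl(h_1^{\alpha\beta}(u)\bigr)$ explicitly enough to see that it is anti-triangular with units on the anti-diagonal, which immediately gives $\det\bigl(h_1^{\alpha\beta}\bigr)=\pm1\ne0$ for every value of $u$ (equivalently, for every value of the $\sigma_i$'s). The point is that one does not need a closed formula for the entries below the anti-diagonal at all.

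First I would use the description of $h_1$ as the coefficient of $L^{n-1}$ in the expansion of $P^2$ in $\operatorname{Id},L,\dots,L^{n-1}$, where $P=p_nL^{n-1}+\dots+p_1\operatorname{Id}=\sum_{k=0}^{n-1}p_{k+1}L^k$; this is legitimate since $P^2$ is manifestly a polynomial in $L$ and, by $\gl$-regularity, such an expansion is unique. Expanding the square gives $P^2=\sum_{k,l=0}^{n-1}p_{k+1}p_{l+1}L^{k+l}$, and I would split this sum according to whether $k+l\le n-1$ or $k+l\ge n$. In the first range $L^{k+l}$ is already one of the basis operators, so it contributes to the coefficient of $L^{n-1}$ precisely when $k+l=n-1$; in the second range one reduces $L^{k+l}$ using the Cayley--Hamilton relation $L^n=\sigma_1L^{n-1}+\dots+\sigma_n\operatorname{Id}$, producing coefficients which are polynomials in the $\sigma_i$.

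The key bookkeeping observation is then the following. Regarded as a quadratic form in $p=(p_1,\dots,p_n)$, the ``direct'' contribution $\sum_{k+l=n-1}p_{k+1}p_{l+1}$ is exactly the anti-diagonal unit form (Gram matrix $J$ with $J_{\alpha,\,n+1-\alpha}=1$), whereas every term arising from the reduction of $L^{k+l}$ with $k+l\ge n$ is a multiple of $p_{k+1}p_{l+1}$ with $(k+1)+(l+1)\ge n+2$, hence affects only the entries $(\alpha,\beta)$ of the Gram matrix with $\alpha+\beta\ge n+2$. Consequently $h_1^{\alpha\beta}=0$ whenever $\alpha+\beta\le n$, $h_1^{\alpha\beta}=1$ whenever $\alpha+\beta=n+1$, and $h_1^{\alpha\beta}$ is some polynomial in the $\sigma_i(u)$ whenever $\alpha+\beta\ge n+2$. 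Such a matrix is anti-triangular with units on the anti-diagonal, so $\det\bigl(h_1^{\alpha\beta}\bigr)=(-1)^{n(n-1)/2}\ne0$ regardless of $u$, proving the lemma. (Conceptually, this says that $h_1$ is the quadratic form associated, under the identification $p\leftrightarrow\sum_k p_{k+1}\lambda^k$, to the Frobenius pairing $(f,g)\mapsto\ell(fg)$ on $\mathbb{R}[\lambda]/\bigl(\chi_L(\lambda)\bigr)$, where $\ell$ extracts the coefficient of $\lambda^{n-1}$; this pairing is non-degenerate precisely because its Gram matrix in the monomial basis is the anti-triangular matrix just described.)

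I do not expect a real obstacle here. The only thing to be careful about is the claim that the reduction terms never reach the anti-diagonal, and this is immediate from the degree count $k+l\ge n\Rightarrow(k+1)+(l+1)\ge n+2$; the mild subtlety is purely organizational, namely that one must single out $h_1$ (the top-degree coefficient) among $h_1,\dots,h_n$ and group the quadratic form by anti-diagonals rather than expand it entry by entry.
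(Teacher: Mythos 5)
Your proposal is correct and follows essentially the same route as the paper: both expand $\bigl(p_nL^{n-1}+\dots+p_1\operatorname{Id}\bigr)^2$ in powers of $L$, observe that the coefficient of $L^{n-1}$ receives the anti-diagonal unit form from the terms with $k+l=n-1$ while the Cayley--Hamilton reductions only touch entries with $\alpha+\beta\ge n+2$, and conclude that the Gram matrix is anti-triangular with units on the anti-diagonal. The paper merely phrases the entry extraction via second derivatives $\partial^2 h_1/\partial p_i\partial p_j$, which is the same bookkeeping.
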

\begin{proof}
We write the r.h.s. of \eqref{id1} as
$$
\Big(p_n L^{n - 1} + \dots + p_1 \operatorname{Id}\Big)^2 = \sum_{i = 0}^{2n - 1} \Bigg(\sum_{k = 1}^{i + 1} p_k p_{i - k + 2}\Bigg) L^i.
$$
Hence, from \eqref{id1} we get
$$
\frac{\partial^2 h_1}{\partial p_i \partial p_j} L^{n - 1} + \dots + \frac{\partial^2 h_n}{\partial p_i \partial p_j} \operatorname{Id} = L^{i + j - 2}.
$$
This immediately implies that non-zero terms of $\Bigl(h^{\alpha \beta}_1\Bigr)$ are the only ones with $\alpha + \beta \geq n + 1$. Moreover, one can see that $h^{\alpha \beta}_1 = 1$ for $\alpha + \beta = n + 1$.  In other words, the matrix of $h_1$ has the form
$$
\Bigl( h^{\alpha \beta}_1\Bigr) = \left( \begin{array}{ccccc}
     0 & \dots & 0 & 0 & 1  \\
     0 & \dots & 0 & 1 & * \\
     0 & \dots & 1 & * & *  \\
     & \iddots & & & \\
     1 & \dots & * & * & * \\
\end{array}\right),
$$
and is obviously non-degenerate, as stated. \end{proof}

The second condition (ii) is algebraic and can be checked directly by using standard matrix algebra.  
We, however, will derive it differently.
For our next construction we will need an operator field $\widehat L$, which can be understood as a prolongation of $L$ to the cotangent bundle. In the given second companion coordinates $(p,u)$, we set
$$
\widehat L = \left(\begin{array}{cc}
     L^{\top} & 0  \\
     0 & L 
\end{array}\right). 
$$
The next Lemma provides some differential identities, which are crucial for our construction.

\begin{Lemma}\label{m3}
The functions $h_i$  from \eqref{id1} satisify the following identities
    \begin{equation}\label{id2}
\begin{aligned}
    & \widehat{L}^* \ddd h_i = \sigma_i \ddd h_1 + \ddd h_{i + 1}, \quad i = 1, \dots, n - 1, \\
    & \widehat{L}^* \ddd h_n = \sigma_n \ddd h_1 
\end{aligned}
\end{equation}
\end{Lemma}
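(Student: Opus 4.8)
The plan is to exploit the defining identity \eqref{id1} directly. Write $P = p_n L^{n-1} + \dots + p_1 \operatorname{Id}$, so that \eqref{id1} reads $h_1 L^{n-1} + \dots + h_n \operatorname{Id} = P^2$, which I abbreviate as $H(L) = P^2$ where $H(\lambda) = h_1\lambda^{n-1} + \dots + h_n$. Since $L$ is $\gl$-regular, an operator commutes with $L$ iff it is a polynomial in $L$ of degree $\le n-1$ with coefficients that are functions on $\mathsf M$ (or on $T^*\mathsf M$), and such a representation is unique; this is the key structural fact I will use repeatedly to ``read off'' coefficients. The differentials $\ddd h_i$ are $1$-forms on $T^*\mathsf M$, and $\widehat L^*$ acts on them; the content of \eqref{id2} is that $\widehat L^*$ sends the span of $\ddd h_1, \dots, \ddd h_n$ into itself, with the matrix of the companion form $L_{\mathsf{comp2}}$ (the first row involves $\sigma_i$, the subdiagonal is the identity).

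The main computation is to differentiate \eqref{id1}. First I would take the exterior derivative of the scalar identity obtained by pairing \eqref{id1} with suitable vectors — more invariantly, differentiate $H(L) = P^2$ as an identity between operator-valued functions on $T^*\mathsf M$. The left side gives $\sum_i (\ddd h_i)\, L^{n-i} + \sum_i h_i\, \ddd(L^{n-i})$; here one must be careful that $\ddd L$ is controlled by the Nijenhuis/companion structure. The right side gives $\ddd(P^2) = (\ddd P)\, P + P\, (\ddd P)$. The strategy is to contract this operator identity against the ``cyclic'' covector and vector that diagonalize the companion picture, or equivalently to extract the coefficient of each power $L^j$ using $\gl$-regularity, so that all the $\ddd(L^{n-i})$ and $\ddd P$ terms — which are explicit in second companion coordinates because $L = L_{\mathsf{comp2}}$ has constant entries except the last row $(\sigma_n, \dots, \sigma_1)$ — combine to produce exactly the recursion $\widehat L^* \ddd h_i = \sigma_i \ddd h_1 + \ddd h_{i+1}$. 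Concretely, in the coordinates $(p,u)$ the only nonconstant entries of $L$ are in the last row, $\partial L / \partial u^k$ involves $\partial \sigma_j/\partial u^k$, and $\widehat L^*$ applied to $\ddd h_i = \sum_\alpha \frac{\partial h_i}{\partial p_\alpha}\ddd p_\alpha + \sum_\alpha \frac{\partial h_i}{\partial u^\alpha}\ddd u^\alpha$ acts by $L^\top$ on the $\ddd p$-part and by $L$ on the $\ddd u$-part; matching this against $\sigma_i\,\ddd h_1 + \ddd h_{i+1}$ reduces to the algebraic recursion $A_{i+1} = LA_i - \sigma_i\operatorname{Id}$ combined with the second-row-of-$P^2$ computation from Lemma \ref{m2}.

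An alternative, perhaps cleaner, route is recursive: assuming the identity for index $i$, apply $\widehat L^*$ once more and use $\widehat L^* \ddd h_1$ together with the Leibniz-type behaviour to derive the identity for $i+1$; the base case $i=1$ and the closing relation $\widehat L^* \ddd h_n = \sigma_n\,\ddd h_1$ are then the only things requiring a direct check, and the latter is essentially the Cayley--Hamilton relation $L^n = \sigma_1 L^{n-1} + \dots + \sigma_n \operatorname{Id}$ fed through \eqref{id1}. I expect the main obstacle to be bookkeeping: keeping track of how $\ddd$ interacts with the operator powers $L^j$ and with $\widehat L^*$ on $T^*\mathsf M$ (as opposed to on $\mathsf M$), and in particular verifying that the ``derivative of $L$'' contributions conspire with the $p$-quadratic structure of the $h_i$ so that no extra terms survive. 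Once \eqref{id2} is established, conditions (ii) and (iii) of Proposition \ref{prop:3.1} will follow by combining it with Theorem \ref{thm:0} (the covariant-derivative-free form of geodesic compatibility) and the $\gl$-regularity dictionary, which I would carry out afterwards.
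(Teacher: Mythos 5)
Your treatment of the $p$-derivative half of $\ddd h_i$ is sound and is exactly what the paper does: differentiating \eqref{id1} in $p_i$ gives $\sum_j \pd{h_j}{p_i}L^{n-j} = 2L^{i-1}P$, both sides \emph{are} polynomials in $L$ (since $L$ does not depend on $p$), so $\gl$-regularity lets you read off coefficients, and Cayley--Hamilton closes the recursion. The gap is in the $u$-derivative half. There, your plan — differentiate the operator identity and ``extract the coefficient of each power $L^j$ using $\gl$-regularity'' — does not go through as stated, for two reasons. First, after differentiation the left-hand side contains terms $h_i\,\ddd(L^{n-i})$, which are sums of products $L^a(\ddd L)L^b$; these are \emph{not} polynomials in $L$ with $1$-form coefficients, so the uniqueness statement underlying $\gl$-regular coefficient extraction simply does not apply to the differentiated identity. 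Second, and more importantly, the claim that the $\ddd L$-contributions ``conspire so that no extra terms survive'' is not bookkeeping: it is precisely the point where the vanishing of the Nijenhuis torsion must be used, and it is false for a generic operator in second companion form. For $n=2$ one finds $h_1 = 2p_1p_2 + p_2^2\sigma_1$, so the $u$-part of the claimed identity reduces to $L^*\ddd\sigma_1 = \sigma_1\,\ddd\sigma_1 + \ddd\sigma_2$; taking $\sigma_1 = u^1$, $\sigma_2 = 0$ gives $L^*\ddd u^1 = \ddd u^2 \neq u^1\ddd u^1$, and indeed that $L$ has nonzero torsion. Your plan nowhere identifies how the Nijenhuis condition enters, so this cancellation is asserted rather than proved.

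The paper avoids this by splitting $\ddd h_i$ into its $u$-part $\bar\ddd h_i$ and $p$-part $\tilde\ddd h_i$ and treating them by different means. For the $u$-part it observes that, with the $p_i$ frozen as scalar parameters, the operator $P^2 = h_1L^{n-1}+\dots+h_n\Id$ commutes with $L$ and is a (strong) symmetry of $L$, and then invokes \cite[Lemma 2.3]{nij4}: for a $\gl$-regular Nijenhuis operator, the coefficients $f_i$ of any symmetry $M = f_1L^{n-1}+\dots+f_n\Id$ automatically satisfy $L^*\ddd f_i = \sigma_i\,\ddd f_1 + \ddd f_{i+1}$ and $L^*\ddd f_n = \sigma_n\,\ddd f_1$. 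That cited lemma is exactly the missing ingredient in your argument (it is where the torsion-free condition is consumed); if you do not want to cite it, you would have to prove an equivalent statement, e.g.\ that in second companion coordinates the Nijenhuis condition forces $L^*\ddd\sigma_i = \sigma_i\,\ddd\sigma_1 + \ddd\sigma_{i+1}$, and then propagate these relations through the explicit polynomial dependence of the $h_i$ on the $\sigma_j$. Your alternative ``recursive'' route has the same hole: the induction step again requires knowing how $\widehat L^*$ interacts with the $u$-differentials of the $\sigma_j$, which is the same unproved input.
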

\begin{proof}
If we consider $p_i$ to be (scalar) parameters, then the l.h.s. of \eqref{id1} is a symmetry of $L$. This implies (see \cite[Lemma 2.3]{nij4}) that
\begin{equation}\label{smash}
\begin{aligned}
& L^* \bar \ddd h_i = \sigma_i \bar \ddd h_1 + \bar \ddd h_{i + 1}, \quad i = 1, \dots, n - 1, \\
& L^* \bar \ddd h_n = \sigma_n \bar \ddd h_1.
\end{aligned}    
\end{equation}
Here $\bar \ddd h_i = \pd{h_i}{u^1} \ddd u^1 + \dots + \pd{h_i}{u^n} \ddd u^n$, that is, {\it one half} of the differential $\ddd h_i$ on the cotangent bundle.
Differentiating \eqref{id1} in $p_i$ we get
$$
\pd{h_1}{p_i} L^{n - 1} + \dots + \pd{h_n}{p_i} \operatorname{Id} = 2 L^{i - 1} \Big(p_n L^{n - 1} + \dots + p_1 \operatorname{Id}\Big), \quad i = 1, \dots, n.
$$
This implies (using the Cayley--Hamilton theorem)
$$
\begin{aligned}
    & \pd{h_1}{p_{i + 1}} L^{n - 1} + \dots + \pd{h_n}{p_{i + 1}} \operatorname{Id} =  L \Big( \pd{h_1}{p_{i}} L^{n - 1} + \dots + \pd{h_n}{p_{i}} \operatorname{Id} \Big) = \\
    & = \pd{h_1}{p_{i}} L^{n} + \dots + \pd{h_n}{p_{i}} L = \Big(\pd{h_2}{p_i} + \sigma_1 \pd{h_1}{p_i}\Big) L^{n - 1} + \dots + \Big(\pd{h_{n}}{p_i} + \sigma_{n - 1} \pd{h_1}{p_i}\Big) L + \sigma_n \pd{h_1}{p_i} \operatorname{Id}.
\end{aligned}
$$
As $L$ is $\gl$-regular, the coefficients in front of the powers of $L$ coincide, and we arrive to the system of $n - 1$ matrix equations
\begin{equation}\label{sys}
\pd{h}{p_{i + 1}} = L_{\mathsf{comp1}} \pd{h}{p_i}, \quad i = 1, \dots, n - 1.    
\end{equation}
Here $\pd{h}{p_i}$ is the column-vector  $\left(\pd{h_1}{p_i}, \dots, \pd{h_n}{p_i}\right)^{\top}$ and $L_{\mathsf{comp1}}$ denotes the first companion form of $L$  (obtained from $L_{\mathsf{comp2}}$ by transposition w.r.t. the anti-diagonal). Notice that \eqref{sys} can be rewritten in equivalent form $\pd{h}{p_{i + 1}} = (L_{\mathsf{comp1}})^i \pd{h}{p_1}$. 
In particular, $\pd{h}{p_{n}} = (L_{\mathrm{comp1}})^{n-1} \pd{h}{p_1}$ implying 
$$
L_{\mathsf{comp1}} \pd{h}{p_n} = L^n  \pd{h}{p_1} =    (\sigma_1 L^{n-1} + \dots + \sigma_n \Id) \pd{h}{p_1} =  \sigma_1\pd{h}{p_n} + \dots + \sigma_n \pd{h}{p_1}.
$$ 
Together with this additional equation, we can write \eqref{sys} in matrix form: 
$$
L_{\mathsf{comp1}} \, \left(\pd{h}{p}\right) = \left(\pd{h}{p}\right) \, L_{\mathsf{comp2}}^{\top},
$$
where $\left(\pd{h}{p}\right)$ is the Jacobi matrix of $h_1, \dots, h_n$ w.r.t.  $p_1,\dots, p_n$. 
Introducing $\tilde \ddd h_i =\left( \pd{h_i}{p_1} ,\dots, \pd{h_i}{p_n} \right)$ and recalling that in our coordinates $L = L_{\mathsf{comp2}}$, we can finally rewrite this system as 
\begin{equation}\label{mash}
\begin{aligned}
    & \tilde \ddd h_i \, L^{\top} = \sigma_i \tilde \ddd h_1 + \tilde \ddd h_{i + 1}, \quad i = 1, \dots, n - 1, \\
    & \tilde \ddd h_n \, L^{\top} = \sigma_n \tilde \ddd h_1.
\end{aligned}
\end{equation}
Gathering \eqref{mash} and \eqref{smash} we get \eqref{id2} from the statement of the Lemma.
\end{proof}

\begin{Corollary}
$L$ is $h_1$-self-adjoint.
\end{Corollary}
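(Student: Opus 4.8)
The plan is to read off the fibrewise (momentum) part of the differential identities of Lemma~\ref{m3}; the statement then drops out from the symmetry of the coefficient matrices. Throughout, write $h_1 = \tfrac12\, h_1^{\alpha\beta}(u)\, p_\alpha p_\beta$ and $h_2 = \tfrac12\, h_2^{\alpha\beta}(u)\, p_\alpha p_\beta$, and let $h_1 = (h_1^{\alpha\beta})$, $h_2 = (h_2^{\alpha\beta})$ denote their (symmetric) coefficient matrices. Since $L$ being self-adjoint with respect to the covariant metric $g = h_1^{-1}$ means $gL = L^{\top}g$, and conjugating by $g^{-1} = h_1$ turns this into $L\,h_1 = h_1\,L^{\top}$, it is enough to prove that the matrix $L\,h_1$ is symmetric.

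Next I would project the first identity of \eqref{id2}, $\widehat L^{*}\,\ddd h_1 = \sigma_1\,\ddd h_1 + \ddd h_2$, onto the $\ddd p$-directions. Because the prolongation $\widehat L$ is block-diagonal with block $L^{\top}$ on the momentum directions and block $L$ on the base directions, its adjoint $\widehat L^{*}$ preserves the splitting of one-forms into a $\ddd p$-part and a $\ddd u$-part; the $\ddd p$-part of the identity is precisely \eqref{mash}, $\tilde\ddd h_1\, L^{\top} = \sigma_1\,\tilde\ddd h_1 + \tilde\ddd h_2$, where $\tilde\ddd h_i = \sum_\alpha \tfrac{\partial h_i}{\partial p_\alpha}\,\ddd p_\alpha$. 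Using $\tfrac{\partial h_i}{\partial p_\gamma} = h_i^{\gamma\beta}p_\beta$ and comparing coefficients of $p_\beta$ on the two sides, this is the matrix equation
\begin{equation*}
 L\,h_1 \;=\; \sigma_1\,h_1 + h_2 .
\end{equation*}

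Finally, the right-hand side is symmetric, because $h_1$ and $h_2$ are symmetric and $\sigma_1$ is a scalar function on $\mathsf M$. Hence $L\,h_1$ is symmetric: $L\,h_1 = (L\,h_1)^{\top} = h_1^{\top}L^{\top} = h_1\,L^{\top}$, which is exactly the assertion that $L$ is $h_1$-self-adjoint. There is no genuine obstacle here — the argument is immediate once Lemma~\ref{m3} is in place — and the only points that need care are the transpose bookkeeping (the prolongation $\widehat L$ acts by $L^{\top}$ on the momenta, and ``self-adjointness of a contravariant metric'' is the transpose of the familiar covariant statement) and the observation that the matrices $h_i^{\alpha\beta}$ extracted from \eqref{id1} are the symmetric ones. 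One may also bypass \eqref{mash} and derive $L\,h_1 = \sigma_1 h_1 + h_2$ directly from \eqref{id2} by computing $\widehat L^{*}\,\ddd p_\alpha = \sum_\gamma L^{\gamma}_{\alpha}\,\ddd p_\gamma$ from the definition of $\widehat L$.
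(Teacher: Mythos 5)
Your argument is correct and is essentially the paper's own proof: both extract the $i=1$ momentum-part identity \eqref{mash} (equivalently, the $\ddd p$-component of \eqref{id2}), rewrite it as the matrix relation $h_1^{ks}L^j_s=\sigma_1 h_1^{kj}+h_2^{kj}$ (the paper's \eqref{eq:FviaH}), and conclude self-adjointness from the symmetry of $h_1$ and $h_2$. The only cosmetic difference is your harmless $\tfrac12$ normalization of the quadratic forms and the transposed form in which you write the matrix identity, neither of which affects the argument.
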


\begin{proof}  By saying that $L$ is self-adjoint w.r.t. the contravariant metric $h_1$, we mean that 
$h_1 (L^*\alpha, \beta) = h_1(\alpha, L^*\beta)$ for any 1-forms (co-vectors) $\alpha$ and $\beta$. In local coordinates, this condition means that the tensor $h_1^{ks}L_s^j$ is symmetric in indices $k$ and $j$.  This is, of course, equivalent to the fact $L$ is self-adjoint w.r.t. the covariant metric $h_1^{-1}$. 

For $i=1$, relations \eqref{mash} give $\tilde \ddd h_1 \, L^{\top} = \sigma_1 \tilde \ddd h_1 + \tilde \ddd h_{2}$.  In coordinates, 
\begin{equation}
\label{eq:FviaH}
h_1^{ks}L_s^j = \sigma_1 h_1^{kj} + h_2^{kj}
\end{equation}  
and we see that the l.h.s. is indeed symmetric  in $k$ and $j$, since $h_1$ and $h_2$ in the r.h.s. are both symmetric by construction.  This completes the proof. \end{proof}

The standard geodesic compatibility condition \eqref{eq:geodeqv0} is not very convenient in our current setting as we deal with a {\it contravariant} metric $h_1$.   However,  there is another elegant Benenti condition which is quite suitable for our purposes,  see \cite[Definition 1 and Theorem 1]{eqm}.  Namely, $h_1$ and $L$ will be geodesically compatible if the quadratic functions 
\begin{equation}
\label{eq:functions}
H  = \frac{1}{2} h_1(p,u) =   \frac{1}{2}  h_1^{ij}(u) p_i p_j \quad\mbox{and}\quad
F = h_1^{ks} L_s^j p_k p_j
\end{equation}
satisfy the following commutation relation  w.r.t. the standard Poisson structure on $T^*{\mathsf{M}}$
\begin{equation}
\label{eq:geodeqvcontra}
\{ H, F\} =  2H \cdot \left( \tfrac{\partial \tr L}{\partial u^q} h_1^{\alpha q} p_\alpha   \right).
\end{equation}
  
To verify this relation we need the following algebraic lemma.

\begin{Lemma}\label{m1}
Let $\mathcal P$ be a skew-symmetric form on a vector space $V^n$ and $L$ be an $\mathcal P$-symmetric operator, that is, $\mathcal P(L\xi, \eta) = \mathcal P(\xi, L\eta)$ for all vectors $\xi, \eta$. Then for all integer $p, q \geq 0$ and arbitrary $\xi$ one has
$$
\mathcal P(L^p \xi, L^q \xi) = 0.
$$
\end{Lemma}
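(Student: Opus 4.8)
The plan is to exploit the two defining properties of the pair $(\mathcal P, L)$ separately: first the $\mathcal P$-symmetry of $L$ to \emph{shift} all powers of $L$ onto one argument, and then the skew-symmetry of $\mathcal P$ to kill the resulting expression. The key observation is that for any vectors $\alpha,\beta$ one has $\mathcal P(L\alpha,\beta)=\mathcal P(\alpha,L\beta)$; writing $L^{p}\xi = L(L^{p-1}\xi)$ and applying this identity repeatedly gives, by an obvious induction on $p$,
$$
\mathcal P(L^{p}\xi, L^{q}\xi) \;=\; \mathcal P(L^{p-1}\xi, L^{q+1}\xi) \;=\; \dots \;=\; \mathcal P(\xi, L^{p+q}\xi).
$$
By the symmetric argument, shifting the powers the other way, one also gets $\mathcal P(L^{p}\xi, L^{q}\xi) = \mathcal P(L^{p+q}\xi, \xi)$.

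Now invoke skew-symmetry: $\mathcal P(L^{p+q}\xi,\xi) = -\,\mathcal P(\xi, L^{p+q}\xi)$. Combining the two displayed identities, the quantity $\mathcal P(L^{p}\xi, L^{q}\xi)$ equals both $\mathcal P(\xi, L^{p+q}\xi)$ and its negative, hence $2\,\mathcal P(L^{p}\xi, L^{q}\xi)=0$, and therefore $\mathcal P(L^{p}\xi, L^{q}\xi)=0$ (we work over $\R$, so division by $2$ is harmless). This covers all $p,q\ge 0$, including the degenerate cases $p=0$ or $q=0$, where the statement reads $\mathcal P(\xi,L^{q}\xi)=0$ and follows directly from the same two ingredients.

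I do not expect any genuine obstacle here — the statement is a two-line consequence of the algebraic identities — so the only thing to be careful about is the bookkeeping of the induction (making sure the shift $\mathcal P(L^{p}\xi,L^{q}\xi)=\mathcal P(L^{p-1}\xi,L^{q+1}\xi)$ is set up with $\alpha=L^{p-1}\xi$, $\beta=L^{q}\xi$) and the implicit use of $\operatorname{char}\ne 2$. If one prefers to avoid the factor $2$, an alternative is to note that $\mathcal P(L^{p}\xi,L^{q}\xi)$ is, up to the shift, the value of the skew form $\mathcal P(L^{p+q}\cdot,\cdot)$ on the diagonal pair $(\xi,\xi)$, which vanishes because any alternating bilinear form vanishes on a repeated argument; but the argument above is the cleanest route.
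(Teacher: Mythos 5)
Your proof is correct and uses the same two ingredients as the paper's: repeatedly shifting powers of $L$ from one argument to the other via $\mathcal P$-symmetry, then invoking skew-symmetry to force the expression to vanish. The only cosmetic difference is that you shift all powers onto one argument and conclude from $x=-x$, whereas the paper balances them to $L^k\xi$ in each slot (splitting into the cases $p+q$ even and odd, and using that $\mathcal P_L(\xi,\eta)=\mathcal P(L\xi,\eta)$ is again skew-symmetric) and concludes from the vanishing of a skew form on the diagonal; these are equivalent arguments.
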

\begin{proof}
The above symmetry condition implies that the form $\mathcal P_L (\xi, \eta) = \mathcal P(L\xi, \eta)$ is skew-symmetric.
Now, if $p + q = 2k$, then due to $L$ being $\mathcal P$-symmetric, we get $\mathcal P(L^p\xi, L^q\xi) = \mathcal P(L^k\xi, L^k \xi) = 0$. Similarly for $p + q = 2k + 1$,  we get $\mathcal P(L^p\xi, L^q\xi) = \mathcal P(L^{k + 1}\xi, L^k \xi) = \mathcal P_L (L^{k}\xi, L^k \xi) = 0$ as $\mathcal P_L$ is skew-symmetric. Lemma is proved.
\end{proof}

Now notice that by construction $\widehat L^*$ is $\mathcal P$-symmetric with respect to the Poisson structure $\mathcal P = \Omega^{-1}$. On the other hand, relations \eqref{id2} imply that $\ddd h_1, \dots, \ddd h_n$ belong to  the subspace spanned by $\bigl(\widehat L^*\bigr)^{k} \ddd h_1$ $(k = 0, 1, \dots)$. By Lemma \ref{m1},  this subspace is isotropic w.r.t. $\mathcal P$, which means that 
$$
\{h_i, h_j\} = 0,
$$
where $\{\, , \, \}$ in the standard Poisson bracket on the cotangent bundle.

To finish the proof it remain to notice that relation \eqref{eq:FviaH} means that the second function $F$ from \eqref{eq:functions}  can be written as $F = h_2 + \sigma_1 h_1$, where $\sigma_1= \tr L$.  Due to Poisson commutativity of $h_1$ and $h_2$ we have
$$
\{H, F\} = \{\tfrac{1}{2} h_1,  h_2 + \sigma_1 h_1\}= \tfrac{1}{2} h_1 \cdot  \{h_1,\sigma_1\}  = H  \cdot \{h_1, \tr L \} =2 H\cdot \Big( \pd{\operatorname{tr} L}{u^q} h^{\alpha q}_1 p_\alpha\Big),
$$
which coincides with \eqref{eq:geodeqvcontra}. This completes the verification of conditions (i), (ii), and (iii), and hence   the proof of Theorem \ref{thm:1}.

\section{Proof of Theorem \ref{thm:2}}

Let $L$ and $g$ be geodesically compatible.  We start with the first statement of Theorem \ref{thm:2}  and consider a $g$-self-adjoint operator  $M$ which is a strong symmetry of $L$. We need to show that the geodesic compatibility condition still holds if we replace $g$ with $\tilde g = g M$  (i.e. $\tilde g(\eta,\xi) =g(M\eta,\xi)$).

We use this condition in the form  \eqref{eq:geodeqv4bis}.  We have
$$
\begin{aligned}
&\mathcal L_{L\xi} \bigl(\tilde g (\eta,\xi)\bigr)  - \mathcal L_\xi \bigl(\tilde g (\eta,L\xi)\bigr) + \tilde g \bigl(\eta, [\xi, L\xi]\bigr) + \tilde g\bigl([\eta, L\xi],\xi\bigr) - \tilde g\bigl(L[\eta, \xi],\xi\bigr) = \\
& \mathcal L_{L\xi} \bigl(g (M\eta,\xi)\bigr)  - \mathcal L_\xi \bigl(g (M\eta,L\xi)\bigr) + g \bigl(M\eta, [\xi, L\xi]\bigr) + g\bigl(M[\eta, L\xi]- ML[\eta, \xi],\xi\bigr) = \\
&   \mathcal L_{L\xi} \bigl(g (M\eta,\xi)\bigr)  - \mathcal L_\xi \bigl(g (M\eta,L\xi)\bigr) + g \bigl(M\eta, [\xi, L\xi]\bigr) + {\color{blue} g\bigl([M\eta, L\xi]- L[M\eta, \xi],\xi\bigr)} + \\
&   g\bigl(M[\eta, L\xi]- ML[\eta, \xi] {\color{blue} - [M\eta, L\xi]  + L[M\eta,\xi]},\xi\bigr) =  \\
&   g \bigl((\nabla_{M\eta} L) \xi, \xi\bigr)  + g \bigl(\langle M , L\rangle (\eta,\xi) , \xi\bigr) =  g \bigl((\nabla_{M\eta} L) \xi, \xi\bigr) + 0  = \\
&  g(M\eta,\xi)\, \mathcal L_\xi \tr L =  \tilde g(\eta,\xi) \,  \mathcal L_\xi \tr L,
\end{aligned}
$$
as required. This proves the first statement of Theorem \ref{thm:2}.

\begin{Remark}\label{rem1}{\rm
This computation also leads to the following conclusion.  Let $g$ and $L$ be geodesically compatible. Then a metric  $\tilde g = gM$ is geodesically compatible with $L$ if and only if  
$g \bigl(\langle M , L\rangle (\eta,\xi) , \xi\bigr)$
vanishes for all $\eta, \xi$.  
}\end{Remark}

Now assume that $L$ is $\gl$-regular. To prove the second statement of Theorem \ref{thm:2} we need the following algebraic fact.

\begin{Lemma} \label{lem:4.1}
Let operators $L$ and $M$ be $g$-self-adjoint.   Then $L$ is self-sdjoint with respect to the metric $\tilde g = gM$ if and only if  $L$ and $M$ commute (i.e., $ML- LM=0$).
\end{Lemma}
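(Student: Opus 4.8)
The plan is to verify both directions of the equivalence by a direct computation in local coordinates, using the bilinear form $\tilde g(\xi,\eta) = g(M\xi,\eta)$ and exploiting the given hypotheses that $L$ and $M$ are each $g$-self-adjoint. Recall that $L$ being self-adjoint with respect to $\tilde g$ means $\tilde g(L\xi,\eta) = \tilde g(\xi, L\eta)$ for all $\xi,\eta$. Expanding the left side, $\tilde g(L\xi,\eta) = g(ML\xi,\eta)$, and expanding the right side, $\tilde g(\xi,L\eta) = g(M\xi,L\eta)$; since $L$ is $g$-self-adjoint, $g(M\xi,L\eta) = g(LM\xi,\eta)$. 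Hence $L$ is $\tilde g$-self-adjoint if and only if $g(ML\xi,\eta) = g(LM\xi,\eta)$ for all $\xi,\eta$, i.e. $g\bigl((ML-LM)\xi,\eta\bigr) = 0$ for all $\xi,\eta$.

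From here the ``if'' direction is immediate: if $ML=LM$ then the operator $ML-LM$ is zero, so the identity holds trivially. For the ``only if'' direction, the hypothesis gives $g\bigl((ML-LM)\xi,\eta\bigr)=0$ for all $\eta$, and since $g$ is non-degenerate this forces $(ML-LM)\xi = 0$ for every $\xi$, i.e. $ML-LM=0$. This non-degeneracy of $g$ is exactly what lets us pass from the vanishing of the bilinear pairing to the vanishing of the operator, so I would state explicitly that $g$ is a (pseudo-Riemannian) metric and hence non-degenerate.

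The only subtlety — and the point I expect to be the main obstacle, though it is a minor one — is making sure the symmetry of the bilinear form $\tilde g$ itself is not being conflated with the self-adjointness of $L$ with respect to it. One should note at the outset that $\tilde g = gM$ is automatically symmetric because both $g$ and $M$ are $g$-self-adjoint: $g(M\xi,\eta) = g(\xi,M\eta) = g(M\eta,\xi)$. This guarantees $\tilde g$ is a genuine (possibly degenerate, but that is a separate matter) symmetric bilinear form, so the notion ``$L$ is $\tilde g$-self-adjoint'' makes sense, and the computation above is the whole content of the lemma. No Nijenhuis or geodesic-compatibility input is needed here; the statement is purely linear algebra, carried out pointwise on the tangent space at each point of $\mathsf M$.
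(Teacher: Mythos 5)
Your proof is correct and follows essentially the same route as the paper: both reduce the statement to the identity $\tilde g(L\xi,\eta)-\tilde g(\xi,L\eta)=g\bigl((ML-LM)\xi,\eta\bigr)$ via the $g$-self-adjointness of $L$, and then invoke non-degeneracy of $g$. Your additional remark that $\tilde g$ is symmetric because $M$ is $g$-self-adjoint is a sensible clarification but does not change the argument.
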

\begin{proof}
The statement immediately follows from the identity
$$
\tilde g(L \xi, \eta) - \tilde g(\xi , L\eta)= g(ML \xi, \eta) - g(M\xi, L\eta) =  g\bigl((ML - LM) \xi, \eta\bigr).
$$
Since $g$ is non-degenerate,  the vanishing of the l.h.s. is equivalent to $ML - LM = 0$, as stated.
\end{proof}

Now let $L$ be a $\gl$-regular Nijenhuis operator which is geodesically compatible with a metric $g$   (notice that such a metric always exists  by Theorem \ref{thm:1}).

Let $\tilde g$ be another metric geodesically compatible with $L$. Recall that $L$ is $\tilde g$-self-adjoint by definition. Define $M$ to be the operator field that relates these two metrics, that is, $\tilde g = gM$ so that $M$ is automatically $g$-self-adjoint. 
By Lemma \ref{lem:4.1}, $ML - LM = 0$ and, thus, $M = f_1 L^{n - 1} + \dots + f_n \operatorname{Id}$  for some smooth functions $f_1,\dots,f_n$. We introduce the following tensor of type $(1, 2)$ which we treat as a vector-valued bilinear form:
$$
T_M = \ddd f_n \otimes L^{n - 1} + \dots + \ddd f_n \otimes \operatorname{Id} 
$$
By definition
\begin{equation}\label{rm0}
    L T_M(\xi, \eta) = T_M(\xi, L \eta).
\end{equation}
As $L$ is  $g$-self-adjoint, we have
\begin{equation}\label{rm1}
g\bigl(T_M(\xi, \eta), \zeta\bigr) = g\bigl(\eta, T_M(\xi, \zeta)\bigr)    
\end{equation}
for all vectors $\xi, \eta, \zeta$. By straightforward computation, using the fact that $\langle L^i, L \rangle = 0$, we get
\begin{equation}\label{rm2}
\begin{aligned}
    \langle M, L\rangle (\eta, \xi) = L [M\eta, \xi] + M [\eta, L\xi] & - LM[\eta, \xi] - [M\eta, L\xi] = \\
    & = T_M(\xi, L\eta) - T_M (L\xi, \eta).
\end{aligned}    
\end{equation}
From \eqref{rm0}, \eqref{rm1} and \eqref{rm2} we obtain
\begin{equation}
\label{eq:rm3}
\begin{aligned}
   & g\bigl(\langle M, L \rangle (\eta, \xi), \xi\bigr) = \\  =\, & g\bigl(T_M(\xi, L\eta), \xi\bigr) - g\bigl(T_M(L\xi, \eta), \xi\bigr) = g\bigl(L\eta, T_M(\xi, \xi)\bigr) - g\bigl(\eta, T_M(L\xi, \xi )\bigr) = \\
      = \, & g\bigl(\eta, LT_M(\xi, \xi)\bigr) - g\bigl(\eta, T_M(L\xi, \xi )\bigr) = g\bigl(\eta, T_M(\xi, L\xi) - T_M(L\xi, \xi )\bigr) = \\
      = \, & - g \bigl(\eta, \langle M, L \rangle (\xi, \xi)\bigr)
\end{aligned}
\end{equation}
By Remark \ref{rem1},  $\tilde g=gM$ is geodesically compatible with $L$ if and only if 
$$
g\bigl(\langle M, L \rangle (\eta, \xi), \xi\bigr) = 0,
$$
for all $\xi, \eta$.  In view of \eqref{eq:rm3},  this implies that $\langle M, L \rangle (\xi, \xi) = 0$, i.e., $M$ is a symmetry of $L$.  Since $L$ is $\gl$-regular, the symmetry $M$ is strong \cite[Theorem 1.2]{nij4}, as required.

\section{Proof of Theorems \ref{thm:3a} and \ref{thm:3b}}

Recall that a symmetric  $(0,2)$-tensor   $A=A_{ij}$ is called a Killing tensor for a (pseudo)-Riemannian metric $g$ if it satisfies the following condition:
$$
\nabla_k A_{ij} + \nabla_i A_{jk} + \nabla_j A_{ki} = 0,
$$ 
where $\nabla$ denote the Levi-Civita connection of $g$. We will  also refer to  the operator $A^i_j = g^{is}A_{sj}$   obtained from $A=A_{ij}$ by raising index as  Killing $(1,1)$-tensor.  Recall that the equivalent definition for Killing $(1,1)$-tensors is as follows:   $A=A^i_j$ is Killing $(1,1)$-tensor for a metric $g$ if $A$ is $g$-self-adjoint and the Hamiltonians 
\begin{equation}
\label{eq:Hamiltonians}
H(u,p)= \frac{1}{2} g^{ij}(u) p_i p_j\quad \mbox{and} \quad F(u,p)=\frac{1}{2} A^i_\alpha g^{\alpha j}(u)p_i p_j
\end{equation}
commute on $T^*\mathsf{M}$ w.r.t. the canonical Poisson structure.

Theorems \ref{thm:3a} and \ref{thm:3b}  are based on the following general statement that establishes a natural relation between quadratic Killing tensors of (pseudo)-Riemannian metrics and solutions of some quasilinear systems. Its special case is   \cite[Remark 2 and Proposition 3]{MB2010}, see also \cite{Bl1, Bl2}.

\begin{Proposition}
\label{prop:basic}
Let $A=A^i_j$ be a Killing $(1,1)$-tensor for a Riemannian metric $g$.  Consider the Hamiltonian $\R^2$-action on  $T^*\mathsf{M}$ generated by the Poisson commuting Hamiltonians \eqref{eq:Hamiltonians}:
$$
\Phi^{x,t}=\Phi^x_H\circ \Phi^t_F:  T^*{\mathsf{M}} \to T^*{\mathsf{M}}.
$$  
Then for any initial condition $(u_0, p_0)$,    the function $u(t,x)$ defined from the relation $\Phi^{x,t} (u_0,p_0)=
\bigl(u(x,t), p(x,t)\bigr)$
is a solution of the quasilinear system 
\begin{equation}
\label{sys1}
u_t = A(u) u_x.
\end{equation}
\end{Proposition}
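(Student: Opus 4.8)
### Proof plan for Proposition \ref{prop:basic}

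The plan is to exploit the fact that the two Hamiltonian flows $\Phi^x_H$ and $\Phi^t_F$ commute (since $\{H,F\}=0$), so that $\Phi^{x,t}=\Phi^x_H\circ\Phi^t_F$ is a genuine $\R^2$-action and the function $u(x,t)$ together with its momentum $p(x,t)$ is well-defined and smooth. The key point to extract is what the $t$-flow does to the base coordinates $u$. Writing Hamilton's equations for $F=\tfrac12 A^i_\alpha g^{\alpha j}p_ip_j$, the velocity in the $u$-direction is $\partial F/\partial p_k = A^k_\alpha g^{\alpha j}p_j$, i.e. $u_t = A\,(g^{-1}p)$; similarly for $H$ one gets $u_x = g^{-1}p$, i.e. $p = g u_x$ along the $x$-flow. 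Substituting the second relation into the first yields exactly $u_t = A(u)\,u_x$. So the heart of the argument is to justify that $p = g\,u_x$ holds not just along the $x$-axis but on a whole neighbourhood in the $(x,t)$-plane.

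First I would set $(u(x,t),p(x,t)) = \Phi^{x,t}(u_0,p_0)$ and record the two systems of Hamilton's equations: from $\Phi^x_H$,
\begin{equation*}
\frac{\partial u^k}{\partial x} = \frac{\partial H}{\partial p_k} = g^{kj}p_j, \qquad \frac{\partial p_k}{\partial x} = -\frac{\partial H}{\partial u^k},
\end{equation*}
and from $\Phi^t_F$,
\begin{equation*}
\frac{\partial u^k}{\partial t} = \frac{\partial F}{\partial p_k} = A^k_\alpha g^{\alpha j}p_j, \qquad \frac{\partial p_k}{\partial t} = -\frac{\partial F}{\partial u^k}.
\end{equation*}
Because the flows commute, these hold simultaneously. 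The first equation of the $H$-block says precisely $p_j = g_{jk}\,\partial u^k/\partial x$, i.e. $p = g\,u_x$ as covectors. Plugging this into the first equation of the $F$-block gives $\partial u^k/\partial t = A^k_\alpha g^{\alpha j}g_{jl}\,\partial u^l/\partial x = A^k_l\,\partial u^l/\partial x$, which is \eqref{sys1}.

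The potential obstacle is making sure the relation $p=g\,u_x$ is valid on the two-dimensional domain and not merely along $t=0$. This is where commutativity is used essentially: since $\Phi^x_H$ is the Hamiltonian flow of $H$, the relation $\partial_x u^k = \partial H/\partial p_k$ holds identically as an identity between functions of $(x,t)$, for every fixed $t$, simply because for each fixed $t$ the curve $x\mapsto \Phi^{x,t}(u_0,p_0) = \Phi^x_H(\Phi^t_F(u_0,p_0))$ is an integral curve of the Hamiltonian vector field $X_H$ with initial data $\Phi^t_F(u_0,p_0)$. Thus no cross-differentiation or compatibility check is needed — the Killing condition (equivalently $\{H,F\}=0$) has already done that work by guaranteeing $\Phi^{x,t}$ is a well-defined action. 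I would also note in passing that the hypothesis that $g$ be Riemannian is not used beyond invertibility; positive-definiteness is irrelevant, only non-degeneracy of $g$ matters, so the statement holds verbatim for pseudo-Riemannian $g$ as well.

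Finally I would remark that this is exactly the mechanism by which Theorem \ref{thm:3a} follows: taking $A=A_i$ (the Killing tensors \eqref{eq:integrals} associated with a geodesically compatible pair $(g,L)$) shows each flow $u_{t_i}=A_i u_x$ carries a $g$-geodesic $x\mapsto u(x,0)$ — which is the projection to $\mathsf M$ of a trajectory of $\Phi^x_H$ — to the curve $x\mapsto u(x,t_i)$, still the projection of a trajectory of $\Phi^x_H$, hence still a $g$-geodesic; and that the conjugacy $\gamma\mapsto(\gamma(0),g\dot\gamma(0))$ of Theorem \ref{thm:3b} is precisely the identification $u(\cdot,t)\mapsto(u(0,t),p(0,t))$ read off from $p=g\,u_x$.
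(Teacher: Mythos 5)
Your proof is correct and follows essentially the same route as the paper's: both read off Hamilton's equations for the two commuting flows, obtain $u_x = g^{-1}p$ from the $H$-flow and $u_t = A g^{-1} p$ from the $F$-flow at every point of the $(x,t)$-domain, and substitute to get $u_t = A u_x$. Your extra care in justifying that $p = g\,u_x$ holds off the axis $t=0$ (via commutativity of the flows) is a point the paper treats more tersely but identically in substance.
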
 

\begin{proof}
By definition, the functions $\bigl(u(x,t), p(x,t)\bigr)$ define the natural parametrisation of the $\Phi$-orbit of the point $(u_0, p_0)$.   In particular,   $\bigl(u(x,t_{\mathrm{c}}), p(x,t_{\mathrm{c}})\bigr)$ for a fixed $t=t_{\mathrm{c}}$ is a solution of the Hamiltonian system generated by $H$ and, similarly,  $\bigl(u(x_{\mathrm{c}},t), p(x_{\mathrm{c}},t)\bigr)$ for a fixed $x=x_{\mathrm{c}}$ is a solution of the Hamiltonian system generated by  $F$. Hence at each point $(x,t)$, we have
$$
\frac{\partial u^i}{\partial x} = \frac{\partial H}{\partial p_i}= g^{ij} p_j\quad\mbox{and similarly}\quad 
\frac{\partial u^i}{\partial t} = \frac{\partial F}{\partial p_i}= A^i_\alpha g^{\alpha j} p_j,
$$
which immediately implies $\frac{\partial u^i}{\partial t}  =  A^i_\alpha  \frac{\partial u^\alpha}{\partial x}$ or, shortly, 
$u_t = A(u) u_x$, as required.
\end{proof}

This proposition can be naturally generalised to the case when $g$ admits several {\it commuting} Killing tensors.
Indeed,  consider a (pseudo)-Riemannian metric $g$ and $g$-self-adjoint operators  $A_0=\Id, A_1, \dots , A_{n-1}$ such that the quadratic functions $F_i (x,p) =  \frac{1}{2} g^{-1} ( A_i^* p, p)$, $i=0,1,\dots,n-1$, pairwise commute on $T^*\mathsf{M}$ w.r.t. the canonical Poisson structure (in particular, each of $A_i$ is a Killing $(1,1)$-tensor for $g$).
These functions generate a Hamiltonian $\R^{n}$-action on $T^*{\mathsf M}$:
$$
\Phi^{(x,t_1,\dots,t_k)} :  T^*{\mathsf{M}} \to T^*{\mathsf{M}},    \quad \Phi^{(x,t_1,\dots,t_k)}  = \Phi^x_{F_0}\circ  \Phi^{t_1}_{F_1} \circ \dots \circ \Phi^{t_k}_{F_k},
$$ 
where $\Phi^{t}_{F_i}$ denotes the Hamiltonian flow generated by $F_i$. 
 
\begin{Corollary}\label{cor:1}
In the above setting, let 
$$
\bigl(u(x,t_1,\dots,t_{n-1}), p(x,t_1,\dots,t_{n-1})\bigr) = \Phi^{(x,t_1,\dots,t_{n-1})} (u_0, p_0)
$$ 
be an orbit of this action. Then $u(x,t_1,\dots,t_{n-1})$ is a solution of the system of quasilinear equations
\begin{equation}
\label{sys2}
\begin{aligned}
u_{t_1} &= A_1(u) u_x,\\
u_{t_2} &= A_2(u) u_x,\\
&\dots \\
u_{t_{n-1}} &= A_{n-1}(u) u_x.
\end{aligned}
\end{equation}
\end{Corollary}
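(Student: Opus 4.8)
The plan is to deduce Corollary \ref{cor:1} from Proposition \ref{prop:basic} by a straightforward iteration, using the fact that the flows $\Phi^{t_i}_{F_i}$ commute with each other. First I would fix a point $(x,t_1,\dots,t_{n-1})$ in the parameter space and observe that the orbit map $\Phi^{(x,t_1,\dots,t_{n-1})}$ is, by construction, a composition of the individual Hamiltonian flows; since the $F_i$ pairwise Poisson commute, these flows commute, so we are free to reorder them. In particular, for each fixed index $j\in\{1,\dots,n-1\}$ we may write $\Phi^{(x,t_1,\dots,t_{n-1})} = \Phi^{t_j}_{F_j}\circ \Phi^x_{F_0} \circ (\text{the remaining flows})$, so that as a function of the two variables $(x,t_j)$ alone (with the other $t_i$ held fixed), the point $\bigl(u(x,t_1,\dots,t_{n-1}),p(x,t_1,\dots,t_{n-1})\bigr)$ traces out precisely a joint orbit of the commuting pair $(F_0,F_j)$ through the shifted initial condition obtained by first applying the remaining flows to $(u_0,p_0)$.

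Next I would apply Proposition \ref{prop:basic} to this pair. Each $A_j$ is a Killing $(1,1)$-tensor for $g$ (this is built into the hypothesis that $F_j$ commutes with $F_0 = H$), and $A_0 = \Id$, so the hypotheses of the Proposition are met with $A := A_j$, $H := F_0$, $F := F_j$. The Proposition then tells us that the map $(x,t_j)\mapsto u$ satisfies $u_{t_j} = A_j(u)\,u_x$. Concretely, the computation is identical to the one in the proof of the Proposition: along the orbit one has $\partial u^i/\partial x = \partial F_0/\partial p_i = g^{ij}p_j$ and $\partial u^i/\partial t_j = \partial F_j/\partial p_i = (A_j)^i_\alpha g^{\alpha\beta} p_\beta$, and eliminating $p$ between the two yields $u_{t_j} = A_j(u)\,u_x$. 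Since $j$ was arbitrary, all $n-1$ equations of \eqref{sys2} hold simultaneously for the single function $u(x,t_1,\dots,t_{n-1})$, which is exactly the claim.

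There is essentially no obstacle here; the only point requiring a word of care is the interchange of flows, which is legitimate precisely because the $F_i$ pairwise Poisson commute — this is what makes $\Phi$ a genuine $\R^n$-action rather than merely an iterated composition, and it is what allows the same function $u$ to be regarded, for every choice of $j$, as an $(x,t_j)$-parametrised joint orbit of $(F_0,F_j)$. One should also note that, as with the Proposition, everything is local: the flows and hence $u$ are defined only for $(x,t_1,\dots,t_{n-1})$ in a neighbourhood of the origin (or globally if the flows are complete), which matches the hypotheses under which system \eqref{sys2} — and later \eqref{eq:syst8} — is considered. No positive-definiteness of $g$ is actually used, so the statement applies to pseudo-Riemannian $g$ as well, consistent with how it will be invoked in the proofs of Theorems \ref{thm:3a} and \ref{thm:3b}.
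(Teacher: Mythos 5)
Your proposal is correct and follows exactly the route the paper intends: the Corollary is stated as an immediate generalisation of Proposition \ref{prop:basic}, obtained by applying the same computation $\partial u^i/\partial x = \partial F_0/\partial p_i$, $\partial u^i/\partial t_j = \partial F_j/\partial p_i = A^i_\alpha g^{\alpha\beta}p_\beta$ to each pair $(F_0,F_j)$, with the reordering of the commuting flows being the only point needing mention. Nothing is missing.
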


The statement of Theorem \ref{thm:3a} follows from this Corollary and the fact that the operators $A_i$ from system  \eqref{eq:syst8} are commuting Killing $(1,1)$-tensors for any metric $g$ that is geodesically compatible with $L$  \cite[Corollary 1]{eqm}. Indeed, if $\gamma (x)= (u^1(x),\dots, u^n(x))$ is a geodesic of $g$, then the curve $(u (x), p(x))$ with $p_i(x)  = g_{ij}(u) \dot u^j(x)$ is an orbit of the Hamiltonian flow $\Phi^x_{F_0}$ on $T^*{\mathsf{M}}$ generated by $F_0=H = \frac{1}{2} g^{ij}(u) p_i p_j$.   This $\R^1$-orbit can be naturally included into an $\R^n$-orbit of the action  $\Phi^{(x,t_1,\dots,t_{n-1})}$  so that    $\bigl(u(x,0,\dots, 0), p(x, 0,\dots, 0)\bigr)=(u(x), p(x))$.

Now for any fixed $t_1, \dots, t_{n-1}$, the curve  $\bigl(u(x,t_1,\dots,t_{n-1}), p(x,t_1,\dots,t_{n-1})\bigr)$ parametrised by $x$ is still an
orbit of the Hamiltonian flow $\Phi^x_{F_0}$.  Since $F_0 = H$ is the Hamiltonian of the geodesic flow of $g$, the curve $u(x,t_1,\dots,t_{n-1})$ is a $g$-geodesic  (for fixed $t_1, \dots, t_{n-1}$). On the other hand,  by Corollary \ref{cor:1},  $u(x,t_1,\dots,t_{n-1})$ is the solution of \ref{thm:3a} with the initial conditon $u(x,0,\dots,0)= \gamma(x)$.   This completes the proof of  Theorem \ref{thm:3a}.

Thus, the evolutionary flows $u_{t_i} = A_i(u) u_x$ naturally act on the space of $g$-geodesics.  Since every geodesic is uniquely defined by its initial condition $(u(0), p(0))\in T^*\mathsf{M}$,   we can naturally identify the space $\mathfrak G$ of all (parametrised) geodesics with the cotangent bundle $T^*\mathsf{M}$ by setting  
\begin{equation}
\label{eq:conjugacy}
\gamma \in \mathfrak G \mapsto (u(0), p(0)) \in T^*\mathsf{M}, 
\end{equation}
where
$u(0)= \gamma(0)$ and $p_i(0) = g_{ij}(u(0)) \dot u^j$. 

To prove Theorem \ref{thm:3b},  it remains to compare the action $\Psi^{t_0,\dots,t_{n-1}}$ on $\mathfrak G$ and the Hamiltonian action $\Phi^{t_0,\dots,t_{n-1}}$ on $T^*\mathsf{M}$. We have
$$
\Psi^{t_0,\dots,t_{n-1}} (\gamma(x)) =  u(x+t_0, t_1,\dots, t_{n-1}) \quad\mbox{and}\quad
\Phi^{t_0,\dots,t_{n-1}} (u(0), p(0)) = (u(t_0,\dots,t_{n-1}), p(t_0,\dots,t_{n-1})).
$$  

Taking into account that $p_i(t_0,\dots,t_{n-1}) = g_{ij} \frac{\ddd}{\ddd x} |_{x=0} u^j(t_0+x,t_1,\dots,t_{n-1})$, we see that the map 
\eqref{eq:conjugacy} indeed conjugates the actions $\Psi$ and $\Phi$. This completes the proof of Theorem \ref{thm:3b}.

\end{document}